\newcommand{\be}{\begin{equation} }
\newcommand{\ee}{\end{equation} }
\newcommand{\bee}{\begin{eqnarray} }
\newcommand{\eee}{\end{eqnarray} }
\renewcommand{\Re}{{\operatorname{Re}\,}}
\renewcommand{\Im}{{\operatorname{Im}\,}}
\newcommand{\bbb}{|\!|\!|}
\numberwithin{equation}{section}
\newtheorem{Lem}{Lemma}
\newtheorem{Thm}{Theorem}
\newtheorem{definition}{Definition}
\newtheorem{remark}{Remark}
\newtheorem{proposition}{Proposition}
\newcommand{\set}[1]{\ensuremath{\{#1\}}}
\newcommand{\E}[1]{\ensuremath{\mathbb E\left[ #1 \right]}}
\newcommand{\EE}[1]{\mathbb E}
\newcommand{\curly}[1]{\ensuremath{\mathcal #1}}
\newcommand{\ep}{\ensuremath{\epsilon}}
\newcommand{\R}{\ensuremath{\mathbb R}}
\newcommand{\w}{\ensuremath{\omega}}
\newcommand{\W}{\ensuremath{\Omega}}
\newcommand{\twiddle}[1]{\ensuremath{\widetilde{#1}}}
\newcommand{\case}[2]{\ensuremath{#1, \text{~if~} #2}}
\newcommand{\setst}[2]{\ensuremath{\left\{#1\,\middle|\,#2\right\}}}
\newcommand{\intersection}{\ensuremath{\cap}}
\newcommand{\abs}[1]{\left\lvert #1 \right\rvert}
\newcommand{\inprod}[2]{\ensuremath{\left\langle#1,#2\right\rangle}}
\newcommand{\gives}{\ensuremath{\rightarrow}}
\newcommand{\x}{\ensuremath{\times}}
\newcommand{\lr}[1]{\ensuremath{\left(#1\right)}}
\renewcommand{\Re}{\ensuremath{\mathrm{Re} \ }}
\renewcommand{\Im}{\ensuremath{\mathrm{Im} \ }}
\newcommand{\dell}{\ensuremath{\partial}}
\DeclareMathOperator{\Cov}{Cov}
\DeclareMathOperator{\sgn}{sgn}
\DeclareMathOperator{\Den}{Den}
\newcommand{\twomat}[4]{\ensuremath{ \left(\begin{array}{cc} #1 & #2 \\
#3 & #4 \end{array}\right)}}
\newcommand{\half}{{\frac{1}{2}}}
\newcommand{\Z}{{\mathbb Z}}
\newcommand{\ihbar}{{\frac{i}{h}}}
\newcommand{\ncal}{{\mathcal N}}
\newcommand{\acal}{{\mathcal A}}
\newcommand{\fcal}{{\mathcal F}}
\newcommand{\hcal}{{\mathcal H}}
\DeclareMathOperator{\supp}{supp}
\newcommand{\ZN}{|Z_{\Phi_N}|}
\title{Nodal Sets of Random Eigenfunctions for the Isotropic Harmonic
Oscillator}            % used by \maketi
\begin{document}
\author{Boris Hanin, Steve Zelditch, Peng Zhou}

\address{Department of Mathematics, Northwestern  University, Evanston, IL
60208, USA}
\email[B. Hanin]{bhanin@math.northwestern.edu}
\email[S. Zelditch]{zelditch@math.northwestern.edu}
\email[P. Zhou]{pengzhou@math.northwestern.edu}

\thanks{Research partially supported by NSF grant DMS-1206527  .}
%\tableofcontents
\maketitle
\setcounter{section}{-1}

\begin{abstract}  The expected hypersurface measure $\hcal^{d-1}(Z_{E,h}
\cap B(r, x))$ of nodal sets of random
eigenfunctions of eigenvalue $E$ of the semi-classical isotropic harmonic
oscillator in balls $B(r, x) \subset
\R^d$ is determined as $h\gives 0.$ In the allowed region the volumes are
of order $h^{-1},$ while in the
forbidden region they are of order $h^{-\half}$.
\end{abstract}
\section{Introduction}

This article is concerned with the semi-classical asymptotics of nodal (i.e.
zero) sets of random eigenfunctions
of the isotropic Harmonic Oscillator,
\begin{equation} \label{Hh}
H_{h} =  \sum_{j = 1}^d \left(- \frac{h^2}{2}   \frac{\partial^2 }{\partial
x_j^2} + \frac{x_j^2}{2} \right),
\end{equation}
on $L^2(\R^d)$. Random isotropic Hermite functions of fixed degree have
an $SO(d-1)$ symmetry and are in
some ways analogous to random spherical harmonics of fixed degree on
$L^2(S^{d})$, whose nodal sets have been the
subject of many recent studies (see e.g. \cite{NS}).

\begin{wrapfigure}{r}{0.4\textwidth}
\vspace{-11pt}
\begin{center}
  \includegraphics[width=1.0\linewidth]{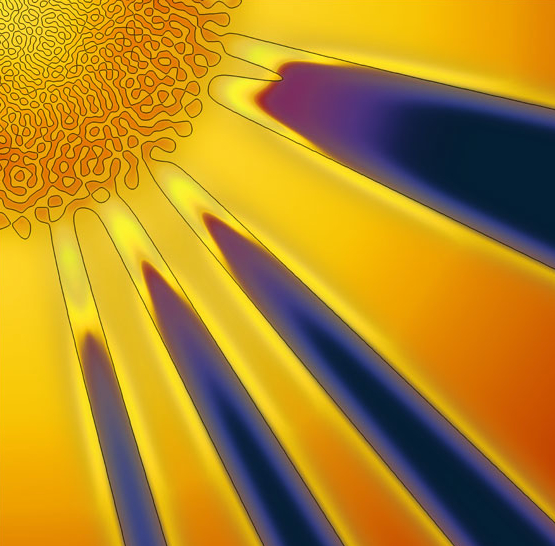}
\end{center}
\caption{}
\vspace{-10pt}
\end{wrapfigure}

However, there is a fundamentally new aspect to eigenfunctions of
Schr\"odinger operators on $\R^d$, namely the
existence of allowed and forbidden regions. In the allowed region, $H_{h}$
behaves like an elliptic operator
(with parameter $h$) and the nodal sets of eigenfunctions behave similarly
to those of eigenfunctions of the
Laplace operator on a Riemannian manifold.  For instance the classical
estimates of Donnelly-Fefferman \cite{DF}
for the hypersurface measure of nodal sets of Laplace eigenfunctions for
real analytic metrics has an analogue
for semiclassical eigenfunctions of Schr\"odinger operators with analytic
metrics and potentials (see Long Jin
\cite{J}).  In the $C^{\infty}$ case one has lower bounds on hypersurface
volumes of nodal sets in the allowed
region which are similar to those for smooth metrics \cite{ZZ}.  However,
there do not seem to exist prior
results on nodal volumes in the forbidden region, although there do exist
numerical and heuristic results on
random eigenfunctions of \eqref{Hh} in Bies-Heller \cite{BH}. In the
forbidden region, eigenfunctions are
exponentially decaying and it is not clear to what extent they oscillate and
have zeros; in dimension one,
eigenfunctions of the Harmonic oscillator have no zeros in the forbidden
region. To gain insight into the
behavior of nodal sets in the forbidden region, we randomize the problem and
consider Gaussian random
eigenfunctions of \eqref{Hh}. Our main results show that the expected
hypersurface measure of nodal sets in
compact subsets of the allowed region are of order $h^{-1}$, parallel to
that of Laplace eigenfunctions, while in
the forbidden region they are of order $h^{-\half}$.

To state our main result, Theorem \ref{T:Main}, we introduce some notation
and background. Acting on
$L^2(\R^d,dx),$ $H_h$ has an orthonormal basis of eigenfunctions
\begin{equation}
\phi_{\alpha,h}(x)=h^{-d/4}p_{\alpha}\lr{x\cdot
h^{-1/2}}e^{-x^2/2h},\label{E:Scaling Relation}
\end{equation}
where $\alpha=\lr{\alpha_1,\ldots, \alpha_d}\geq (0,\ldots,0)$ is a
$d-$dimensional multi-index and
$p_{\alpha}(x)$ is the product $\prod_{j=1}^d (2^{\alpha_j} \alpha_j!)^{-1/2}
\pi^{-1/4}p_{\alpha_j}(x_j)$ of the
hermite polynomials $p_k$ (of degree $k$)
in one variable. The eigenvalue of $\phi_{\alpha,h}$ is given by
\begin{equation} \label{EV}
H_{h} \phi_{\alpha,h} = h (|\alpha|+d/2) \phi_{\alpha,h}.
\end{equation}
The multiplicity of the eigenvalue $ h (|\alpha|+d/2)$ is the partition
function of $|\alpha|$, i.e. the number
of $\alpha=\lr{\alpha_1,\ldots, \alpha_d}\geq (0,\ldots,0)$  with a fixed
value of $|\alpha|$. The high
multiplicity of the eigenvalues is similar in order of magnitude to that of
the eigenvalues of $\Delta$ on the
standard $S^{d-1}$.

The semi-classical asymptotics of eigenfunctions is the asymptotics as $h
\to 0$ where the energy level $E_h$
satisfies $E_h  \to E$. This corresponds to fixing an energy level of the
classical Hamiltonian
$$H(x, \xi) = \half (|\xi|^2 + |x|^2): T^* \R^m \to \R.$$
We refer to \cite{Zw} for this and other background on semi-classical
asymptotics of Schr\"odinger operators. For
the remainder of this paper we fix  $E>0$ and set
\[h_N:=\frac{E}{N+\frac{d}{2}}.\]
We will usually write $h=h_N.$ We then consider the eigenspace
\begin{equation}\label{VN} V_N = \mbox{Span} \{\phi_{\alpha, h_N}, |\alpha|
= N\}. \end{equation}

\begin{definition}
  A Gaussian random eigenfunction for $H_h$ with eigenvalue $E$ is the
  random series
$$ \Phi_N(x):=\sum_{\abs{\alpha}=N}  a_{\alpha}\phi_{\alpha,h_N}(x),  $$
for $a_{\alpha}\sim N(0,1)_{\R}$ i.i.d. Equivalently, it is the Gaussian
measure $\gamma_N$ on $V_N$
which is given by $e^{- \sum_{\alpha} |a_{\alpha}|^2/2} \prod d
a_{\alpha}$.
\end{definition}
\noindent We denote by $$Z_{\Phi_N} = \{x: \Phi_N(x) = 0\} $$
 the nodal set of $\Phi_N$ and by $\ZN$ the random measure of integration
 over
$Z_{\Phi_N}$ with respect to
 the Euclidean surface measure (the Hausdorff measure) of the nodal set.
 Thus for any ball $B \subset \R^d$,
$$\ZN (B) = \hcal^{d-1} (B \cap Z_{\Phi_N}).$$
Thus $\E \ZN$ is a measure on $\R^n$ given by
$$\E \ZN (B) = \int_{V_N}  \hcal^{d-1} (B \cap Z_{\Phi_N}) d\gamma_N. $$

The allowed region $\acal_E$, resp. the  forbidden region $\fcal_E$ are
defined respectively by \begin{equation}
\label{AF}
\acal_E = \{x:\abs{x}^2<2E\}, \quad \fcal_E = \{x:\abs{x}^2>2E\}.
\end{equation}
Thus, $\acal_E$ is the projection to $\R^d$ of the energy surface $\{H = E\}
\subset T^* \R^d$ and $\fcal_E$ is
its complement. The boundary of $\acal_E$ is the known as the caustic set
and is denoted $\partial \acal_E$ or
$\{|x| = 2 E\}$. Our main result is:

\begin{Thm}\label{T:Main}
Let $x\in \R^d$ such that $0<\abs{x}\neq \sqrt{2E}.$ Then the measure $\E
\ZN$ has a density $F_N(x)$ with
respect to Lebesgue measure given by
$$\left\{\begin{array}{ll}
\mbox{If}~x\in \acal_E\backslash \set{0},  &
F_N(x) \simeq h^{-1}\cdot c_d \sqrt{2E-\abs{x}^2}\lr{1+O(h)}\label{E:Allowed
Density} \\ & \\
\mbox{If}~ x\in \fcal_E, &
F_N(x) \simeq h^{-1/2}\cdot C_d
\frac{E^{1/2}}{\abs{x}^{1/2}\lr{\abs{x}^2-2E}^{1/4}}
\lr{1+O(h)}\label{E:Forbidden Density}
 \end{array}, \right.$$
where the implied constants in the `$O$' symbols are uniform on compact
subsets of the interiors of
$\acal_E\backslash\set{0}$ and $\fcal_E$, and where
\[c_d=
\frac{\Gamma\lr{\frac{d+1}{2}}}{\sqrt{d\pi}\Gamma\lr{\frac{d}{2}}}\qquad
\text{and}\qquad C_d =
\frac{\Gamma\lr{\frac{d}{2}}}{\sqrt{\pi}\Gamma\lr{\frac{d-1}{2}}}.\]
\end{Thm}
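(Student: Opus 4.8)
The plan is to compute the density $F_N(x)$ via the Kac--Rice formula, which expresses the expected nodal measure of a smooth Gaussian field $\Phi_N$ in terms of the two-point statistics of $\Phi_N$ and $\nabla \Phi_N$ at the single point $x$. Concretely, since $\Phi_N(x)$ is a mean-zero Gaussian with variance $\Pi_N(x,x)$ (the value of the spectral projection kernel on the diagonal), and the gradient $\nabla \Phi_N(x)$ is jointly Gaussian with $\Phi_N(x)$, the Kac--Rice density takes the form
\[
F_N(x) = \frac{1}{\sqrt{2\pi\, \Pi_N(x,x)}}\; \mathbb{E}\big[\, |\nabla \Phi_N(x)| \;\big|\; \Phi_N(x) = 0 \,\big],
\]
where the conditional expectation is of the Euclidean norm of a Gaussian vector in $\R^d$ whose covariance matrix $\Lambda_N(x)$ is built from the second derivatives of the kernel $\Pi_N$ at the diagonal, corrected by the rank-one term coming from conditioning on $\Phi_N(x)=0$. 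So the whole problem reduces to the asymptotics as $h \to 0$ of the scaled Hermite kernel $\Pi_N(u,v) = \sum_{|\alpha|=N}\phi_{\alpha,h_N}(u)\phi_{\alpha,h_N}(v)$ and of its first and second derivatives, evaluated on the diagonal $u=v=x$.

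The key technical input is therefore a precise asymptotic expansion of $\Pi_N(x,x)$, $\partial_{x_i}\partial_{y_j}\Pi_N(x,y)\big|_{y=x}$, and the mixed term $\partial_{x_i}\Pi_N(x,y)\big|_{y=x}$, with control of the $O(h)$ error, and crucially with the expansion being qualitatively different in the allowed region $|x|^2 < 2E$ and the forbidden region $|x|^2 > 2E$. I would obtain these from a generating-function / integral-representation approach: the Mehler formula gives a closed form for $\sum_\alpha \phi_{\alpha,h}(u)\phi_{\alpha,h}(v) t^{|\alpha|}$, and extracting the coefficient of $t^N$ is a contour integral $\frac{1}{2\pi i}\oint t^{-N-1}(\cdots)\,dt$ to which one applies the method of steepest descent / stationary phase as $N \to \infty$. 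The location of the saddle point, and whether the relevant critical point is real or the phase is genuinely oscillatory versus exponentially decaying, is exactly what distinguishes $x \in \acal_E$ from $x \in \fcal_E$: in the allowed region one gets an oscillatory integral contributing at order $h^{-1}$ to the gradient variance after the scaling $h_N = E/(N+d/2)$ is unwound, while in the forbidden region the kernel and its derivatives are governed by a real saddle and the variances degenerate, leaving $\nabla\Phi_N(x)$ effectively one-dimensional at leading order, which produces the $h^{-1/2}$ scaling and the different dimensional constant $C_d$ in place of $c_d$.

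With the kernel asymptotics in hand, the remaining steps are: (1) assemble the covariance data into the conditional covariance matrix $\Lambda_N(x)$ of $\nabla\Phi_N(x)$ given $\Phi_N(x)=0$, identifying its leading-order structure — I expect it to be asymptotically a scalar multiple of the identity in the allowed case (by the $SO(d-1)$-type symmetry of the problem around the radial direction, the radial and tangential second derivatives of the kernel contribute comparably), and to be rank-deficient at leading order in the forbidden case; (2) evaluate the Gaussian moment $\mathbb{E}|Y|$ for $Y \sim N(0,\Lambda_N(x))$, which for a scalar-times-identity covariance $\sigma^2 I_d$ gives $\sigma\sqrt{2}\,\Gamma(\tfrac{d+1}{2})/\Gamma(\tfrac d2)$, accounting for the Gamma-function constants $c_d$ (full $d$-dimensional) and $C_d$ (effectively $(d-1)$-dimensional); (3) combine with the $1/\sqrt{2\pi\Pi_N(x,x)}$ prefactor and simplify, tracking the powers of $h$ and the explicit functions $\sqrt{2E-|x|^2}$ and $E^{1/2}|x|^{-1/2}(|x|^2-2E)^{-1/4}$ from the saddle-point evaluation.

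The main obstacle I anticipate is the uniform control of the kernel asymptotics — establishing not just the leading term but the $(1+O(h))$ relative error, uniformly on compact subsets of the open allowed and forbidden regions, and in particular handling the transition in the nature of the saddle point. Near the caustic $\{|x|^2 = 2E\}$ the two saddles coalesce (an Airy-type degeneration) and the steepest-descent estimates break down, which is precisely why the theorem excludes a neighborhood of $\partial\acal_E$; one must therefore carefully quantify how the implied constants blow up as $|x|^2 \to 2E$ from either side and confine all estimates to the interiors. A secondary but real difficulty is keeping track of the vector/matrix structure of the derivative covariances so that the correct dimensional constants emerge; here exploiting the rotational symmetry to diagonalize $\Lambda_N(x)$ into a radial component and $(d-1)$ equal tangential components at the outset will substantially streamline the computation, and makes transparent why the forbidden-region answer sees only $d-1$ effective dimensions.
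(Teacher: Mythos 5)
Your overall skeleton matches the paper's: Kac--Rice reduces the density to the covariance kernel $\Pi_{h,E}$ and its derivatives on the diagonal, Mehler gives a closed form, and extraction of the $N$-th coefficient is a contour/Fourier integral in $t$ analyzed by saddle-point methods, with the allowed/forbidden dichotomy encoded in the nature of the critical set. But as a proposal the sketch has two real gaps that hide where essentially all the technical work lies.

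First, the singularity of the Mehler amplitude at $t=0$. Your plan is to apply steepest descent to $\oint t^{-N-1}(\cdots)\,dt$, equivalently to $\int_{-\pi}^{\pi} U_h(t)e^{\frac{i}{h}tE}\,dt$, directly. In the allowed region the critical point of the $t$-phase $S(t,x,x)+tE$ sits exactly at $t=0$, where the Mehler prefactor $(\sin t)^{-d/2}$ blows up, so the naive saddle-point expansion does not apply. The paper's entire Proposition \ref{BIGPROP} exists to fix this: near $t=0$ one Fourier-transforms in $y$, trading the $(\sin t)^{-d/2}$ singularity for a smooth $(\cos t)^{-d/2}$ at the cost of an additional $p$-integral, and the critical set of the resulting phase $\widehat{S}(t,x,p)-y\cdot p+tE$ is a $(d-1)$-dimensional non-degenerate critical \emph{manifold} $\{t=0\}\times\{|p|^2=2E-|x|^2\}$ (Lemma \ref{L:Allowed Crits}). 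This is where the factors $\omega_{d-1}$, $(2E-|x|^2)^{d/2-1}$, and ultimately the isotropy $\delta_{jk}$ in $\W_{x,E}$ come from: the $\delta_{jk}$ arises from $\int_{S}p_jp_k\,d\mu\propto\delta_{jk}/d$ over the momentum sphere, not from differentiating the kernel radially versus tangentially. Your proposal describes a single saddle point and never confronts the degenerate/singular nature of the $t=0$ endpoint.

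Second, the forbidden-region scaling $h^{-1/2}$ is not visible at leading order of the saddle-point expansion, and your sketch does not supply the mechanism that produces it. There the critical points of the $t$-phase are purely imaginary, $t=\pm i\beta$ with $\cosh(\beta/2)=|x|/\sqrt{2E}$ (Lemma \ref{L:Forbidden Crits}) --- not real saddles, as you write --- and the critical set is zero-dimensional. At leading $h^{-2}$ order the two terms in the numerator of $\W_{x,E}$ cancel identically, and the surviving $h^{-1}$ contribution requires the \emph{subleading} term of the stationary phase expansion (the paper imports H\"ormander's Theorem 7.7.5 as Lemma \ref{L:SP LO} precisely for this). That computation shows $\W_{x,E}\sim h^{-1}(\delta_{jk}-\hat x_j\hat x_k)E/(|x|\sqrt{|x|^2-2E})$, i.e.\ the conditional gradient covariance is a rank-$(d-1)$ projection onto the tangential directions (the radial component dies), which is what produces $C_d=\Gamma(\tfrac d2)/(\sqrt\pi\,\Gamma(\tfrac{d-1}{2}))$. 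Your remark that $\nabla\Phi_N(x)$ becomes ``effectively one-dimensional'' is backwards: it loses one dimension, becoming effectively $(d-1)$-dimensional, as your own later identification of $C_d$ with a $(d-1)$-dimensional Gaussian moment correctly indicates. Without the explicit subleading stationary phase computation and the identification of which derivative terms cancel, the $h^{-1/2}$ rate and the projection structure are asserted rather than derived.

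Your remaining concern --- uniformity on compact subsets away from $\{0\}$ and the caustic, and the Airy-type breakdown at the caustic --- is correctly identified and is indeed why the theorem excludes a neighborhood of $\partial\acal_E$; the paper handles this by keeping the cutoff $\delta$ fixed and the saddle non-degenerate uniformly on such compacta.
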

The novel aspect of Theorem \ref{T:Main} is the different growth rates in
$h$ for the density of zeros in the
allowed and forbidden region. Let us explain briefly why this happens. As
recalled in Lemma \ref{L:Gaussian KR}
of \S \ref{S:KR}, $F_N(x)$ scales like the square root of the operator norm
of the $d\x d$ matrix
\begin{equation}
\lr{\W_{x,E}}_{1\leq j,k\leq d}
=\frac{\Pi_{h,E}(x,x)\dell_{x_k}\dell_{y_j}|_{x=y}\Pi_{h,E}(x,y)-\dell_{x_k}|_{x=y}\Pi_{h,E}(x,y)\cdot
\dell_{y_j}|_{x=y}\Pi_{h,E}(x,y)}{\Pi_{h,E}(x,x)^2},\label{E:Density
Scaling}
\end{equation}
where $\Pi_{h,E}$ is the spectral projector for $H_h$ onto the eigenspace
with eigenvalue $E.$ Proposition
\ref{P:Cov Matrix Asymptotics} shows that $\W_{x,E}$ is a diagonal matrix
times $h^{-2}$ for  $x\in \curly A_E$
$j=2$ and $h^{-1}$  when $x\in \curly F_E$.

These different powers of $h$ in $\Omega_{x,E}$ come from
%Lemma \ref{L:Resolved Projector},
Proposition \ref{BIGPROP}, which gives different
oscillatory integral representations for $\Pi_{h,E}(x,y)$ in the allowed and
forbidden regions. Let us  write
them schematically as
\[\Pi_{h,E}(x,y)=\int A(\zeta) e^{\frac{i}{h}S(\zeta, x,y)}d\zeta.\]
The amplitude $A$ is independent of $x,y.$ Differentiating under the
integral, we see that the first term in the
numerator of \eqref{E:Density Scaling}, is
\begin{align}
\label{E:Schematic 0} &\frac{i}{h}\int \int A(\zeta_1)\cdot A(\zeta_2)\left[
\dell_{x_k}\dell_{y_j}|_{x=y}
S(\zeta_1, x,y)\right]
e^{\frac{i}{h}\lr{S(\zeta_1,x,y)+S(\zeta_2,x,y)}}d\zeta_1d\zeta_2\\
\label{E:Schematic 1}& - \frac{1}{h^2}\int A(\zeta_2)
e^{\frac{i}{h}S(\zeta_2,x,x)} d\zeta_2\cdot \int A(\zeta_1)
\dell_{x_k}|_{x=y}S(\zeta_1,x,y) \cdot
\dell_{y_j}S(\zeta_1,x,y)e^{\frac{i}{h}S(\zeta_1,x,y)}d\zeta_1.
\end{align}
The other term, $-\dell_{x_k}|_{x=y}\Pi_{h,E}(x,y)\cdot
\dell_{y_j}|_{x=y}\Pi_{h,E}(x,y),$ in the numerator of
\eqref{E:Density Scaling} is
\begin{equation}
\frac{1}{h^2} \int A(\zeta_1)\dell_{x_k}|_{x=y}S(\zeta_1,x,y)
e^{\frac{i}{h}\lr{S(\zeta_1,x,y)}}d\zeta_1\cdot
\int A(\zeta_2) \dell_{y_j}S(\zeta_2,x,y)
e^{\frac{i}{h}\lr{S(\zeta_2,x,y)}}d\zeta_2.\label{E:Schematic 2}
\end{equation}
By the method of stationary phase, the above integrals localize to the
critical point set of $S.$ In the
forbidden region
$\fcal_E$ , this critical point set has dimension $0$ (see Lemma
\ref{L:Forbidden Crits}). The amplitudes in
\eqref{E:Schematic 1} and \eqref{E:Schematic 2} therefore cancel to order $h^{-2}$, and
their $h^{-1}$ term together with the
 \eqref{E:Schematic 0}'s $h^{-1}$ term contribute to $\W_{x,E}.$ In
contrast, in the allowed region $\acal_E$,
the critical point set of $S$ has dimension $d-1$ (see Lemma \ref{L:Allowed
Crits}). Each of the integrals in
\eqref{E:Schematic 2} vanishes when localized to the critical point set (see
Equation \eqref{E:Deriv Amp}). The
$h^{-2}$ contribution from \eqref{E:Schematic 1} gives the leading order of
growth for $\W_{x,E}$
in $\acal_E$.

Before giving the necessary background to prove Theorem \ref{T:Main}, let us
emphasize that our result does not
cover the case of $\abs{x}\in \set{0,\sqrt{2E}}.$ Our model has the
$SO(d-1)$ symmetry and the fixed point $x=0$
is special. All odd degree Hermite functions vanish at $x = 0$ (for odd
$|\alpha|$ the eigenfunctions are odd
polynomials times the Gaussian factor). The Kac-Rice formula becomes
singular there since $\Pi_{h_N, E}(x,x) =0 $
when $x = 0$. When $N$ is even, $d_x \Pi_N(x,x) = 0$ at $x = 0$.

\begin{figure}
        \centering
        \begin{subfigure}[b]{0.5\textwidth}
                \includegraphics[width=\textwidth]{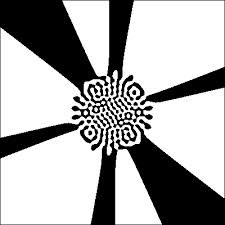}
                %\caption{A gull}
                \label{fig:gull}
        \end{subfigure}%
        ~ %add desired spacing between images, e. g. ~, \quad, \qquad etc.
          %(or a blank line to force the subfigure onto a new line)
       \begin{subfigure}[b]{0.5\textwidth}
                \includegraphics[width=\textwidth]{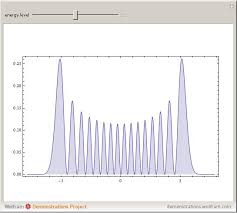}
          %      \caption{A mouse}
                \label{fig:mouse}
        \end{subfigure}
\caption{The boundary between the white and black region is the nodal set of
a random Hermite function in
dimension $2$ is shown on the left. The figure on the right shows the graph
of a Hermite function in dimension
$1.$}
\end{figure}

The caustic set $\abs{x} = \sqrt{2E}$ is also special. It is the image of
the projection $\pi: \{H = E\} \to
\R^d$ along its singular set, where the projection has a fold singularity.
As discussed in \cite{KT}  (see also
\cite{T}), this fold singularity causes a blow-up in $L^p$ norms of
eigenfunctions around the caustic set (as
illustrated in the second figure in dimension one).

The caustic also causes anomalous behavior of the nodal set in a small
`boundary layer' around $\partial
\acal_E$. The nodal hypersurfaces in the forbidden region always cross the
caustic set and connect with nodal
hypersurfaces in the allowed region. In subsequent work we plan to rescale
the nodal sets in
$h^{2/3}$-neighborhoods of $\partial \acal_E$  and study their scaled
distribution.

In semi-classical $h$-notation, the Donnelly-Fefferman result is that for
Laplace eigenfunctions of real analytic
compact Riemannian manifolds, with  $h^{-1}$ the eigenvalue of
$\sqrt{\Delta}$,
 $$c_{g}
h^{-1}  \leq \hcal^{d-1}(Z_{\phi_{h}}) \leq   C_{g}
h^{-1}. $$
Thus in the allowed region, the order of magnitude of the nodal set is the
same as for Laplace eigenfunctions.
As noted above, this has been proved for eigenfunctions of Schr\"odinger
operators with real analytic metrics and
potentials in \cite{J}. The order of magnitude $h^{-\half}$ in the forbidden
region is a new result.  We hope to
explain this result
deterministically  in  subsequent work. It is evident from the graphics that
the nodal domains in $\fcal_E$ have
some angular structure and that the `frequency' of eigenfunctions in the
forbidden region is lower than in
the allowed region.

We expect that the results of this article generalize to all semi-classical
Schr\"odinger operators with
potentials of quadratic
at infinity with evident modifications. In place of eigenspaces one would
take linear combinations of
eigenfunctions
with eigenvalues from intervals of width $O(h)$ corresponding to a fixed
energy level. The case of radial
potentials
should be especially similar. But the difference ``frequencies" of nodal
sets in the allowed and forbidden
regions
should be a general phenomenon. We hope to take this up in subsequent work.
It would also be interesting to
generalize the methods and results of \cite{NS} to random Hermite
eigenfunctions.

Thanks to Long Jin for spotting a gap in the original version of this article, which led
to a substantial revision of  \S \ref{S:Forbidden Proof}.

\section{Background}\label{S:Background}

The calculation of the expected distribution of zeros is based on the
Kac-Rice formula. In this formula the
density of zeros of a Gaussian random function is expressed in terms of the
covariance function
\begin{equation} \label{COV}
\Pi_{h_N, E} (x, y) : = \EE( ( \Phi_{N} (x)
\Phi_N(y)):=\sum_{\abs{\alpha}=N} \phi_{\alpha,h_N}(x)
\phi_{\alpha,h_N}(y),   \end{equation}
which (as is well known) is the orthogonal projection onto the eigenspace
$V_N$. We will often write
$\Pi_{h,E}=\Pi_{h_N,E}.$ As in the case of spherical
harmonics, a key input into the calculations is a relatively explicit
formula for $\Pi_{h_N,E}$.  In this
section, we review the Mehler formulae and then the Kac-Rice formula.
Further background may be found in
\cite{AT,BSZ}.

\subsection{Mehler Formula}
The Mehler formula is an explicit formula for the Schwartz kernel $U_h(t,
x,y)$  of the propagator,
 $e^{-\ihbar t H_h}.$ The Mehler formula \cite{F} reads
\begin{equation}
 U_h(t, x,y) =e^{-\ihbar t H_h}(x,y)= \frac{1}{(2\pi i h \sin t)^{d/2}}
 \exp\left( \frac{i}{h}\left(
 \frac{\abs{x}^2 + \abs{y}^2}{2} \frac{\cos t}{\sin t} - \frac{x\cdot
 y}{\sin t} \right) \right),
 \label{E:Mehler}
\end{equation}
where $t \in \R$ and $x,y \in \R^d$. The right hand side is singular at
$t=0.$ It is well-defined as a
distribution, however, with $t$ understood as $t-i0$. Indeed, since $H_h$
has a positive spectrum the propagator
$U_h$ is holomorphic in
the lower half-plane and $U_h(t, x, y)$ is the boundary value of a
holomorphic function in $\{\Im t < 0\}$.

In the future, we write
\be \label{Mehler-S}
 S(t,x,y) =\frac{\abs{x}^2 + \abs{y}^2}{2} \frac{\cos t}{\sin t} -
 \frac{x\cdot y}{\sin t}
 \ee
for the phase in the Mehler formula \eqref{E:Mehler}.

\subsection{Spectral projections}
The second fact we use is that the spectrum of $H_h$ is easily related to
the integers $|\alpha|$. The operator
with the same eigenfunctions as $H_h$ and eigenvalues $h |\alpha|$ is often
called the number operator, $h
\ncal$. If we replace $U_h(t)$ by $e^{- \frac{i t}{h} \ncal}$ then the
spectral projections $\Pi_{h, E}$ are
simply the Fourier coefficients of $e^{- \frac{i t}{h} \ncal}$. In Lemma
\ref{L:Resolved Projector} we will
derive the related formula,
\begin{align}
\label{E:Projector Integral Forbidden}
\Pi_{h_N, E}(x,y)&=\int_{-\pi}^{\pi} U_h(t-i\epsilon,x,y) e^{\ihbar
(t-i\epsilon) E} \frac{dt}{2\pi}.
\end{align}
 The integral is independent of $\epsilon$. Using the Mehler formula
 \eqref{E:Mehler} we obtain a rather explicit
 integral representation of \eqref{COV}.

\subsection{Kac-Rice Formula}\label{S:KR}  Next we recall the Kac-Rice
formula. We refer to \cite{BSZ, AT} for
further background and proofs of the Kac-Rice formula in a general context
that applies to the setting of this
article. In fact we state the result on a general manifold for future
applications to more general Schr\"odinger
operators.

Let $(M,g)$ be a smooth Riemannian manifold of dimension $m$ and $dV_g$ be
the induced volume form on $M.$
Consider $f:M\gives \R,$ a smooth random function so that at each $x\in M$
the density $\Den_{f(x)}$ with respect
to Lebesgue measure exists. Let us write $\abs{Z_f}$ for the (random)
hypersurface measure on the nodal set
$f^{-1}(0).$
\begin{proposition}[Kac-Rice] \label{L:KR} $\E{\abs{Z_f}}$ has a density $F$
with respect to $dV_g$ given by
  \begin{equation}
F(x)=\Den_{f(x)}(0)\cdot \E{\abs{d f(x)}_g\,|\, f(x)=0}.\label{E:General
KR}
\end{equation}
\end{proposition}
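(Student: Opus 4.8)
The plan is to reduce the statement to a local computation in a single coordinate chart and then apply the classical (Euclidean) Kac–Rice formula, patching the local densities together with a partition of unity. Fix a point $p \in M$ and a coordinate chart $\psi : U \to \R^m$ with $p \in U$; in these coordinates the random function $f$ becomes a smooth random function on an open subset of $\R^m$, and the Riemannian metric $g$ becomes a smoothly varying positive-definite matrix $g_{ij}(x)$. The starting point is the coarea formula: for the smooth (a.s.\ regular, by Bulinskaya's lemma applied under the assumption that $\Den_{f(x)}$ exists) level set $Z_f = f^{-1}(0)$, one has for any test function $\varphi$ supported in $U$
\begin{equation}
\int_{Z_f} \varphi \, d\hcal^{m-1}_g = \lim_{\ep \to 0} \frac{1}{2\ep} \int_{\{|f(x)| < \ep\}} \varphi(x) \, |df(x)|_g \, dV_g(x),
\end{equation}
where $|df(x)|_g = \big(g^{ij}(x)\, \partial_i f(x)\, \partial_j f(x)\big)^{1/2}$ is the Riemannian gradient norm and $dV_g = \sqrt{\det g}\, dx$.

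Next I would take expectations and exchange $\E{\cdot}$ with the limit and the integral (justified by dominated convergence, using that $f$ and $df$ are jointly Gaussian-controlled and $\varphi$ has compact support, so $\E{|df(x)|_g \mathbf 1_{|f(x)|<\ep}}$ is bounded uniformly in $\ep$). This yields
\begin{equation}
\E{\int_{Z_f} \varphi \, d\hcal^{m-1}_g} = \lim_{\ep\to 0} \frac{1}{2\ep}\int_U \varphi(x)\, \E{ |df(x)|_g \, \mathbf 1_{|f(x)|<\ep}}\, dV_g(x).
\end{equation}
The inner expectation is now a purely pointwise probabilistic quantity. Conditioning on the value $f(x) = t$ and disintegrating with respect to the density $\Den_{f(x)}$, we get
\begin{equation}
\frac{1}{2\ep}\E{|df(x)|_g\, \mathbf 1_{|f(x)|<\ep}} = \frac{1}{2\ep}\int_{-\ep}^{\ep} \Den_{f(x)}(t)\, \E{|df(x)|_g \,\big|\, f(x) = t}\, dt \;\xrightarrow[\ep\to0]{}\; \Den_{f(x)}(0)\cdot \E{|df(x)|_g\,\big|\, f(x)=0},
\end{equation}
where the convergence of this average to its value at $t=0$ uses the continuity of $t \mapsto \Den_{f(x)}(t)\, \E{|df(x)|_g\mid f(x)=t}$ near $t = 0$. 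Since the right-hand side is coordinate-independent (both $|df(x)|_g$ and $dV_g$ are intrinsic), the local densities agree on overlaps, and summing over a partition of unity subordinate to an atlas gives the global formula \eqref{E:General KR}.

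The main obstacle is the analytic justification of the two limit exchanges — passing $\E{\cdot}$ through the $\ep \to 0$ limit in the coarea formula, and the continuity/regularity needed to evaluate the conditional expectation at $t=0$. This is exactly where one needs the standing hypotheses (smoothness of $f$, existence and continuity of the one-dimensional density $\Den_{f(x)}$, and non-degeneracy so that $0$ is a.s.\ a regular value), and it is the step treated carefully in \cite{AT, BSZ}; I would either invoke their theorem directly in the stated generality or reproduce the dominated-convergence estimate using the local Gaussian structure. The partition-of-unity patching and the coordinate-invariance check are then routine.
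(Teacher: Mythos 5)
The paper does not actually prove Proposition~\ref{L:KR}; it states the Kac--Rice formula and defers the proof to the references \cite{AT,BSZ}, remarking only that those sources ``apply to the setting of this article.'' So there is no in-paper proof to compare against. Your coarea/disintegration argument is the standard proof of Kac--Rice (essentially the argument in Adler--Taylor \cite{AT}), and the outline is correct: the coarea identity, the exchange of $\E{\cdot}$ with the $\ep$-limit and the spatial integral, the disintegration
\begin{equation*}
\E{|df(x)|_g\,\mathbf 1_{|f(x)|<\ep}}=\int_{-\ep}^{\ep}\Den_{f(x)}(t)\,\E{|df(x)|_g\mid f(x)=t}\,dt,
\end{equation*}
and the patching by a partition of unity are exactly the right steps. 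Your approach is more general in one respect --- it works for any smooth random field satisfying the regularity hypotheses, not just Gaussian ones --- whereas the paper immediately specializes to the Gaussian case in the subsequent Lemma~\ref{L:Gaussian KR}.

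Two small points worth tightening. First, Bulinskaya's lemma (needed to rule out degenerate zeros so that $Z_f$ is a.s.\ a smooth hypersurface and the coarea formula applies) requires more than mere existence of the marginal density $\Den_{f(x)}$: one needs a local uniform bound on that density plus $C^1$ regularity of $f$. Second, the dominated convergence step for exchanging $\E{\cdot}$ with $\lim_{\ep\to 0}$ is the delicate part; the bound you invoke (``$f$ and $df$ jointly Gaussian-controlled'') is an additional assumption beyond what the Proposition nominally hypothesizes, and in the non-Gaussian case one must substitute a moment condition on $\sup|df|$ or use Fatou plus a matching upper bound. You correctly flag both as the technical crux and correctly point to \cite{AT,BSZ} as where they are handled, so the proof sketch is sound as an outline.
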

\noindent In order to rewrite this expression for $F$ for our purposes,
suppose that $f$ is a centered
$1-$dimensional Gaussian field on $M.$ This means that for every $x\in M,$
the random variable $f(x)$ is a
real-valued Gaussian with mean $0.$ Recall that the covariance kernel of $f$
is defined by
\[\Pi_f(x,y):=\E{f(x)f(y)}.\]
The law of any centered Gaussian field on $M$ is determined uniquely by its
covariance kernel. In particular, we
may rewrite the general Kac-Rice formula of Lemma \ref{L:KR} only in terms
of $\Pi_f(x,y)$ as follows.
\begin{Lem}[Kac-Rice for Gaussian Fields]\label{L:Gaussian KR}
Let $f$ be a smooth centered Gaussian field on $M.$ Fix $x\in M.$ In a
geodesic normal coordinate chart centered
at $x,$
\begin{equation}
F(x)= \lr{2\pi}^{-\frac{d+1}{2}}\int_{\R^d}|\W_x^{1/2}\xi|
e^{-\abs{\xi}^2/2}d\xi,\label{E:Gaussian KR}
\end{equation}
where $\W_x$ is the $d\x d$ matrix
\begin{align}
\notag \lr{\W_x}_{1\leq j,k\leq d} &= \dell_{x_j}\dell_{y_k}|_{x=y} \log
\Pi_f(x,y)\\
\label{E:Gaussian Cov Mat} &=
\frac{\Pi_f(x,x)\dell_{x_k}\dell_{y_j}|_{x=y}\Pi_f(x,y)-\dell_{x_k}|_{x=y}\Pi_f(x,y)\cdot
\dell_{y_j}|_{x=y}\Pi_f(x,y)}{\Pi_f(x,x)^2}.
\end{align}
\end{Lem}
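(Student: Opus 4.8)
The plan is to specialize the general Kac--Rice formula \eqref{E:General KR} of Proposition~\ref{L:KR}, namely $F(x)=\Den_{f(x)}(0)\cdot\E{\abs{df(x)}_g\mid f(x)=0}$, to a centered Gaussian field, and to evaluate each factor using only the covariance kernel $\Pi_f$. Fix $x$ and work in a geodesic normal coordinate chart centered at $x$, so that $g_{ij}(x)=\delta_{ij}$; then the Riemannian norm $\abs{\cdot}_g$ on $T_x^*M$ coincides with the Euclidean norm on $\R^d$ and $dV_g$ agrees with Lebesgue measure at $x$, which is why the right-hand side of \eqref{E:Gaussian KR} is purely Euclidean. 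The field value $f(x)$ is a mean-zero Gaussian of variance $\Pi_f(x,x)$, and this variance must be positive for the density $\Den_{f(x)}$ assumed in Proposition~\ref{L:KR} to exist; hence the first factor is immediately $\Den_{f(x)}(0)=\lr{2\pi\,\Pi_f(x,x)}^{-1/2}$.

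Next I would record that $\lr{f(x),\partial_{x_1}f(x),\ldots,\partial_{x_d}f(x)}$ is a centered Gaussian vector in $\R^{d+1}$ (differentiation under the expectation being justified by the smoothness of the field), whose covariances are obtained from $\Pi_f(x,y)=\E{f(x)f(y)}$ by differentiating separately in each argument and restricting to the diagonal: $\E{f(x)\,\partial_{x_j}f(x)}=\partial_{y_j}|_{x=y}\Pi_f(x,y)$ and $\E{\partial_{x_j}f(x)\,\partial_{x_k}f(x)}=\partial_{x_j}\partial_{y_k}|_{x=y}\Pi_f(x,y)$. By the Gaussian regression (conditioning) formula, conditionally on $f(x)=0$ the vector $df(x)$ is again centered Gaussian, now with the deterministic conditional covariance matrix
\[
\Sigma_{jk}=\partial_{x_j}\partial_{y_k}|_{x=y}\Pi_f(x,y)-\frac{\partial_{x_j}|_{x=y}\Pi_f(x,y)\cdot\partial_{y_k}|_{x=y}\Pi_f(x,y)}{\Pi_f(x,x)}.
\]
Since $\Pi_f$ is symmetric in its two arguments, so is $\Sigma$, and comparison with \eqref{E:Gaussian Cov Mat} gives $\Sigma=\Pi_f(x,x)\,\W_x$; the first equality in \eqref{E:Gaussian Cov Mat} is then just the chain rule $\partial_{x_j}\partial_{y_k}\log\Pi_f=\partial_{x_j}\partial_{y_k}\Pi_f/\Pi_f-(\partial_{x_j}\Pi_f)(\partial_{y_k}\Pi_f)/\Pi_f^2$ evaluated on the diagonal.

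To conclude, write a $N(0,\Sigma)$ vector as $\Sigma^{1/2}\xi$ with $\xi\sim N(0,I_d)$ and $\Sigma^{1/2}$ the positive semidefinite square root, so that
\[
\E{\abs{df(x)}_g\mid f(x)=0}=\lr{2\pi}^{-d/2}\int_{\R^d}\abs{\Sigma^{1/2}\xi}\,e^{-\abs{\xi}^2/2}\,d\xi;
\]
since $\Sigma^{1/2}=\Pi_f(x,x)^{1/2}\,\W_x^{1/2}$, this equals $\Pi_f(x,x)^{1/2}\lr{2\pi}^{-d/2}\int_{\R^d}\abs{\W_x^{1/2}\xi}\,e^{-\abs{\xi}^2/2}\,d\xi$. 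Multiplying by $\Den_{f(x)}(0)$, the factors $\Pi_f(x,x)^{1/2}$ cancel while $\lr{2\pi}^{-1/2}\lr{2\pi}^{-d/2}=\lr{2\pi}^{-(d+1)/2}$, which is exactly \eqref{E:Gaussian KR}. The argument is essentially routine; the steps most deserving of care --- the closest thing to an obstacle --- are the justification of differentiating under the expectation sign and the treatment of the degenerate case in which $\W_x$, equivalently $\Sigma$, is not invertible. In that case $N(0,\Sigma)$ is still the law of $\Sigma^{1/2}\xi$, so the displays above, and hence \eqref{E:Gaussian KR} with $\W_x^{1/2}$ the positive semidefinite square root, persist verbatim.
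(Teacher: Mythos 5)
Your proposal is correct and follows essentially the same route as the paper: specialize Proposition~\ref{L:KR}, apply the Gaussian regression formula to $(f(x),df(x))$ to identify the conditional covariance $\Sigma=\Pi_f(x,x)\,\W_x$, and then cancel the $\Pi_f(x,x)^{1/2}$ factors in the change of variables. The one place you diverge is the degenerate case: the paper block-diagonalizes $\W_x$ to a positive definite $k\times k$ block $\twiddle{\W}_x$ and integrates the explicit density over $\R^k$ before re-expanding to $\R^d$, whereas you simply represent $N(0,\Sigma)$ as the pushforward of $N(0,I_d)$ under $\Sigma^{1/2}$, which sidesteps the density altogether and handles $\det\Sigma=0$ with no extra case --- a marginally cleaner way to land the same formula.
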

\begin{proof}
Fix $x\in M.$ The pair $\lr{f(x),df(x)}$ is a centered Gaussian vector. The
so-called regression formula states
that if $(v,w)$ is any centered Gaussian vector with covariance
\[\Cov(v,w)=\twomat{A}{B}{B^*}{C},\]
then $w$ conditioned on $v=0$ is again a centered Gaussian with covariance
$C-B^* A^{-1}B.$ For the vector
$\lr{f(x),df(x)},$ we have
\[A=\Pi_f(x,y),\quad (B)_{1, 1\leq j\leq d}=
\dell_{x_j}|_{x=y}\Pi_f(x,y),\qquad \lr{C}_{1\leq k,j\leq d}=
\dell_{x_j}\dell_{y_k}|_{x=y}\Pi_f(x,y).\]
Hence, the vector $df(x)$ conditioned on $f(x)=0$ is a centered Gaussian
vector with covariance matrix
\[\frac{\Pi_f(x,y)\dell_{x_j}\dell_{y_k}|_{x=y}\Pi_f(x,y) -
\dell_{x_j}|_{x=y}\Pi_f(x,y)
\dell_{y_k}|_{x=y}\Pi_f(x,y)}{\Pi_f(x,x)},\]
which equals $\Pi_f(x,x)\cdot \dell_{x_j}\dell_{y_k}|_{x=y} \log
\Pi_f(x,y).$ Note that
\[\Den_{f(x)}(0)=\lr{2\pi\Pi_f(x,x)}^{-1/2}.\]
Observe that $\Pi_f(x,x)^{-1/2}\cdot df(x)$ is a centered Gaussian vector
with covariance matrix
\[(\W_x)_{jk}=\dell_{x_j}\dell_{y_k}|_{x=y} \log \Pi_f(x,y).\]
Although the matrix $\W_x$ is non-negative definite, it need not be positive definite. Up to an orthogonal change of coordinates, we may write it as
\[\W_x = \twomat{\twiddle{\W}_x}{0}{0}{0}\]
for some positive definite matrix $\twiddle{\W}_x$ matrix of size $k\x k$ for some $1\leq k\leq n.$ The density of a centered Gaussian $\eta$ on $\R^k$ with positive definite covariance matrix $\twiddle{\W}_x$ is then given by
\[\frac{1}{\lr{2\pi}^{k/2}\det \twiddle{\W}_x
^{1/2}}e^{-\frac{1}{2}\inprod{\twiddle{\W}_x^{-1}\eta}{\eta}}d\eta .\]
Thus, using \eqref{E:General KR}, we find
\begin{align*}
F(x)&=\lr{2\pi}^{-1/2}\E{\Pi_f(x,x)^{-1/2}\cdot \abs{d f(x)}_g\,|\,
f(x)=0}\\
      &=\lr{2\pi}^{-1/2} \int_{\R^k}
      \frac{\abs{\eta}}{\lr{2\pi}^{k/2}\det
      \twiddle{\W}_x^{1/2}}e^{-\frac{1}{2}\inprod{\twiddle{\W}_x^{-1}\eta}{\eta}}d\eta\\
      &= \lr{2\pi}^{-\frac{k+1}{2}}\int_{\R^k} |\twiddle{\W}_x^{1/2}\tilde\xi|
      e^{-\abs{\tilde\xi}^2/2}d\tilde\xi\\
      &= \lr{2\pi}^{-\frac{d+1}{2}}\int_{\R^d} |\W_x^{1/2}\xi|
      e^{-\abs{\xi}^2/2}d\xi,
\end{align*}
as claimed.
\end{proof}

Let us denote
$\w_{d-1}=Vol(S^{d-1})=\frac{2\pi^{d/2}}{\Gamma\lr{\frac{d}{2}}}.$ In the
course of proving Theorem
\ref{T:Main}, we will need the following identity for the expected value of
the absolute value of a standard
Gaussian:
\begin{align}
\notag
\int_{\R^d}\frac{\abs{v}}{\lr{2\pi}^{d/2}}e^{-\frac{\abs{v}^2}{2}}dv&=\frac{\w_{d-1}}{\lr{2\pi}^{d/2}}\cdot
\int_0^{\infty} r^d e^{-\frac{r^2}{2}}\\
              \label{E:Gaussian Exp}                     &=
              \sqrt{2}\frac{\Gamma\lr{\frac{d+1}{2}}}{\Gamma\lr{\frac{d}{2}}}.
\end{align}

\subsection{Stationary Phase with Non-Degenerate Critical
Manifolds}\label{S:SP}
We will also need the
 method of stationary phase for non-degenerate critical manifolds, and
 recall the statement here. For further
 background we refer to \cite{DSj, GrSj, Hor, Zw}. Let $S, a \in
 C^{\infty}(\R^N),$ and consider
\[I(h)=\lr{2\pi ih}^{-N/2}\int_{\R^N}e^{iS(x)/h}a(x)dx.\]
One says that $S$ is Bott-Morse if the critical points of $S$ form a
non-degenerate critical manifold, i.e. the
transverse Hessian is non-degenerate.
\begin{Lem} \label{L:SP}
  If $S$ is a Bott-Morse function with connected critical manifold $W$ of
  dimension $n$, then there are constants
  $c_j$ such that
  \begin{equation}
I(h)=\lr{2\pi ih}^{-n/2}e^{-\frac{1}{2}\pi i
\nu}e^{iS(W)/h}\lr{\sum_{k=0}^{\infty}c_k
h^k}+O(h^{\infty})\label{E:Sp Expansion}
\end{equation}
with
\begin{equation}
c_0 = \int_W a(y)d\mu_W,\label{E:Leading Term}
\end{equation}
where $\nu$ is the Morse index of $S$ along $W$ and $d\mu_W$ is the Leray
measure on $W$ induced by its defining
function $dS.$
\end{Lem}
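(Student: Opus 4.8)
The plan is to reduce the integral to a transverse Gaussian integral in which $W$ plays the role of a smooth parameter space, apply the classical (i.e.\ $n=0$) stationary phase lemma fiberwise, and integrate the resulting expansion over $W$. First I would fix a partition of unity $1=\chi_0+\chi_1$ with $\chi_0$ supported in a tubular neighborhood $\curly U$ of $W$ and $\chi_1$ vanishing near $W$. On $\supp\chi_1$ we have $\nabla S\neq 0$, so repeatedly applying the transport operator $L=\hibar\,|\nabla S|^{-2}\,\nabla S\cdot\nabla$ (which satisfies $L e^{iS/h}=e^{iS/h}$) and integrating by parts yields $\lr{2\pi ih}^{-N/2}\int\chi_1 a\, e^{iS/h}=O(h^\infty)$. (I assume here, as holds in all our applications, that $a$ has compact support, or that $W$ is compact and we have already localized near it; otherwise one imposes mild decay on $a$ and $1/|\nabla S|$.) Thus only $\curly U$ contributes.

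Covering $W$ by finitely many charts and refining the partition of unity, it suffices to treat one chart, with coordinates $(y,z)\in\R^n\x\R^{N-n}$ such that $W=\{z=0\}$, $y$ a coordinate on $W$, and $dx=J(y,z)\,dy\,dz$. Since $dS=0$ along $W$, Taylor expansion in $z$ gives $S(y,z)=S(W)+\half\inprod{H(y)z}{z}+O(|z|^3)$ with $H(y)=\dell_z^2 S(y,0)$ symmetric and smooth in $y$; the Bott--Morse hypothesis says exactly that $H(y)$ is invertible for every $y$. Consequently $\nu(y)$, the number of negative eigenvalues of $H(y)$, is a continuous $\Z$-valued function on the connected set $W$, hence a constant $\nu$; and $dS=0$ on $W$ with $W$ connected forces $S|_W\equiv S(W)$. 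One could invoke the parametric Morse lemma to make the transverse phase exactly quadratic, but this is unnecessary: for each fixed $y$ the function $z\mapsto S(y,z)$ has the single nondegenerate critical point $z=0$, so the classical stationary phase lemma applies with $y$ as a parameter and gives
\[
\lr{2\pi ih}^{-(N-n)/2}\int_{\R^{N-n}}e^{\ihbar S(y,z)}a(y,z)J(y,z)\,dz=e^{\ihbar S(W)}\Big(|\det H(y)|^{-1/2}e^{-\frac{\pi i}{2}\nu}a(y,0)J(y,0)+O(h)\Big),
\]
with all coefficients smooth in $y$ and remainders uniform in $y$ on compacta; the only arithmetic involved is $e^{\frac{\pi i}{4}\sigma(H(y))}=e^{\frac{\pi i}{4}(N-n)}e^{-\frac{\pi i}{2}\nu}$ (with $\sigma$ the signature), whose first factor turns $(2\pi h)^{(N-n)/2}$ into $(2\pi ih)^{(N-n)/2}$ and cancels the fiberwise normalization.

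Multiplying through by $\lr{2\pi ih}^{-N/2}=\lr{2\pi ih}^{-n/2}\lr{2\pi ih}^{-(N-n)/2}$ and integrating over $y\in W$ produces \eqref{E:Sp Expansion}, with
\[
c_0=\int_W|\det H(y)|^{-1/2}\,a(y,0)\,J(y,0)\,dy=\int_W a\,d\mu_W,
\]
the last equality unwinding the definition of the Leray measure $d\mu_W$ of $dS$, whose density in these coordinates is $|\det H(y)|^{-1/2}J(y,0)$; this chart-independence of $c_0$ (and of each $c_k$, likewise an integral over $W$) is what allows the finitely many local contributions to be added without double counting.

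Nothing in this argument is deep, but three points demand care: (i) the uniformity in the parameter $y$ of the remainder estimates in the fiberwise stationary phase expansion, which must be quoted precisely (e.g.\ from \cite{Hor} or \cite{Zw}); (ii) verifying that $|\det H(y)|^{-1/2}J(y,0)\,dy$ is indeed the intrinsic Leray density of $dS$ on $W$, so that the local formulas patch and the $c_k$ are well defined; and (iii) the signature bookkeeping that produces precisely $e^{-\frac12\pi i\nu}$ and not some other eighth root of unity. I expect (ii)---reconciling our coordinate and normalization conventions with the definition of $d\mu_W$---to be the most error-prone step.
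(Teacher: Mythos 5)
The paper does not actually prove Lemma \ref{L:SP}: it states it as a background fact and defers to the references \cite{DSj, GrSj, Hor, Zw}, supplementing the statement only with the coordinate description of the Leray measure $d\mu_W=\abs{dx}/\big(\abs{\det Hess^{\perp}S}^{1/2}\abs{dz}\big)$. Your proof is precisely the standard argument in those sources (non-stationary-phase localization away from $W$, then fiberwise stationary phase transverse to $W$ with $y\in W$ as a parameter, then integration of the parametric expansion over $W$), so there is nothing in the paper to contrast it with. The argument is correct as written: in particular the signature bookkeeping $\sigma(H(y))=(N-n)-2\nu$ combined with the normalization $(2\pi ih)^{-(N-n)/2}$ does yield exactly $e^{-\frac{\pi i}{2}\nu}$, and your chart-local leading density $\abs{\det H(y)}^{-1/2}J(y,0)\,dy$ agrees with the paper's description of $d\mu_W$ since $\abs{dx}=J(y,0)\abs{dy}\abs{dz}$ along $W$; the chart-independence you flag in point (ii) is exactly what the intrinsic definition of the Leray measure of $dS$ guarantees, and is what makes the finitely many local pieces add correctly.
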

Equivalently, $d\mu_W$ is the quotient of $\abs{dx}$ by the Riemannian
measure on the normal bundle associated to
$S:$
\[d\mu_W=\frac{\abs{dx}}{\abs{\det Hess^{\perp} S}^{1/2}\abs{dz}},\]
where $Hess^{\perp} S$ is the normal Hessian of $S$ and $z$ is a coordinate
on the normal bundle to $S.$ In addition to Lemma \ref{L:SP}, we will need an explicit
expression for the sub-leading terms of the stationary phase expansion in the case when
$S$ is quadratic and has a single non-degenerate critical point.

\begin{Lem}[\cite{Hor} Theorem 7.7.5]\label{L:SP LO}
Suppose $a,S\in \curly S(\R)$ and $S$ is a complex-valued phase function such that $\Im
S|_{\supp(a)} \geq 0$ with a unique non-degenerate critical point at $t_0\in \supp(a)$
satisfying $\Im S(t_0)=0.$ Then
\begin{align}
\label{E:Stationary Phase 2}  I(h) = C(S) \left[a(t_0)+\frac{h}{i}\lr{-\frac{a''}{2
S''}+\frac{S''''\cdot a}{8\lr{S''}^2}+\frac{S'''\cdot a'}{2\lr{S''}^2}-\frac{5\lr{S'''}^2
\cdot a}{24 \lr{S''}^3}}\bigg|_{t=t_0}+O(h^2)\right],
\end{align}
where
\begin{equation}
C (S)= e^{i \frac{\pi}{4} \text{sgn} S''(0)} \lr{\frac{2\pi
h}{\abs{S''(t_0)}}}^{1/2}.\label{E:C}
\end{equation}
\end{Lem}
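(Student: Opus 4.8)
The plan is to follow the proof of Theorem 7.7.5 in \cite{Hor}, specialized to one variable and to an $O(h^2)$ error, but keeping careful track of the coefficient of $h$. The first step is to localize near $t_0$: choose $\chi\in C_0^\infty(\R)$ equal to $1$ near $t_0$ and supported in a small interval $U$ on which $S'$ vanishes only at $t_0$. On $\supp\bigl((1-\chi)a\bigr)$ the phase is non-stationary, and since $\Im S\geq 0$ forces $|e^{iS/h}|\leq 1$, repeated integration by parts with $L=\tfrac{h}{iS'}\tfrac{d}{dt}$ together with the Schwartz decay of $a$ shows that this piece contributes $O(h^\infty)$. It then remains to analyze $\int \chi(t)a(t)e^{iS(t)/h}\,dt$. (One could alternatively invoke Lemma~\ref{L:SP} with a single isolated critical point to know \emph{a priori} that $I(h)=C(S)\bigl(c_0+c_1 h+O(h^2)\bigr)$ with $c_0=a(t_0)$, reducing the task to identifying $c_1$; the bookkeeping below does both at once.)

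The second step is the model reduction. Translate so that $t_0=0$, write $S(t)=S(0)+\tfrac12\lambda t^2+g(t)$ with $\lambda:=S''(0)$ and $g(t)=O(t^3)$, and observe that $\Im S\geq 0$ with $\Im S(0)=0$ forces $\Im\lambda\geq 0$, so the model integral $\int e^{i\lambda t^2/2h}\,dt=C(S)$ converges. Expanding $e^{ig(t)/h}=\sum_{m\geq0}\tfrac1{m!}(ig/h)^m$ and Taylor-expanding $a\chi$ at $0$, the integral becomes a finite sum of terms of the form $(\mathrm{const})\,h^{-m}\!\int t^{\ell}e^{i\lambda t^2/2h}\,dt$ plus Taylor-remainder terms. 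The elementary moment identity $\int_\R t^{2k}e^{i\lambda t^2/2h}\,dt=(2k-1)!!\,(ih/\lambda)^k\,C(S)$, together with the vanishing of odd moments, shows that such a term scales like $h^{\ell/2-m}$. Hence the $h^0$ part is $C(S)\,a(0)$, and the $h^1$ part receives contributions exactly from four sources: the quadratic amplitude term $\tfrac12 a''(0)t^2$ (case $m=0$, $\ell=2$); the quartic phase term $\tfrac1{24}S''''(0)t^4$ times $a(0)$ (case $m=1$, $\ell=4$); the cubic phase term $\tfrac16 S'''(0)t^3$ times the linear amplitude term $a'(0)t$ (case $m=1$, $\ell=4$); and the leading $t^6$ term $\tfrac1{36}S'''(0)^2 t^6$ of $\tfrac12(ig/h)^2$ times $a(0)$ (case $m=2$, $\ell=6$). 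Substituting $M_2=ih/\lambda$, $M_4=-3h^2/\lambda^2$, $M_6=-15ih^3/\lambda^3$ (each times $C(S)$) and simplifying with $1/i=-i$, these four contributions combine into precisely $C(S)\cdot\tfrac{h}{i}\bigl(-\tfrac{a''}{2S''}+\tfrac{S''''a}{8(S'')^2}+\tfrac{S'''a'}{2(S'')^2}-\tfrac{5(S''')^2 a}{24(S'')^3}\bigr)\big|_{t_0}$, which is the asserted formula.

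The step I expect to be the main obstacle — and the reason one ultimately appeals to \cite{Hor} rather than reproducing every estimate — is the uniform control of the remainder: one must show that truncating the Taylor expansions of $a$ and of $g$ at the orders above, and truncating the series for $e^{ig/h}$ after $m=2$, leaves an error that is genuinely $O(h^2)$ with implied constant depending only on finitely many sup-norms of derivatives of $a$ and $S$ on $U$. This is handled by pairing integral (Lagrange) forms of the Taylor remainders against the model weight and using the scaling $\int_\R |t|^{\ell}e^{-(\Im\lambda)t^2/2h}|\chi(t)|\,dt=O(h^{(\ell+1)/2})$ when $\Im\lambda>0$, or, when $\Im\lambda=0$, the corresponding oscillatory bounds obtained after the contour rotation $t\mapsto e^{i\pi/4\,\mathrm{sgn}\,\lambda}s$; the terms with $m\geq 3$ are then seen to be $o(h^2)$ and the retained $m\leq 2$ remainders $O(h^2)$. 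Throughout, the non-negativity of $\Im S$ guarantees $|e^{iS/h}|\leq 1$, so the complex nature of the phase adds nothing beyond bookkeeping, and once the remainder bound is in place the coefficient computation of the previous paragraph is purely mechanical.
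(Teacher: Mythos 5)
The paper does not give a proof of this lemma — it is stated as a direct citation of H\"ormander, Theorem 7.7.5 — so there is no in-paper argument to compare against. Your proof is a correct reconstruction, in one variable, of the standard H\"ormander argument: non-stationary-phase localization using $\Im S \geq 0$ and the Schwartz decay of $a$, reduction to the quadratic model by writing $S = S(t_0) + \tfrac12\lambda (t-t_0)^2 + g$, Taylor expansion of $a\chi$ and of $e^{ig/h}$, Gaussian moment evaluation, and assembly of the coefficient of $h$. I have checked the four contributions you identify against the moments $M_{2k}=(2k-1)!!\,(ih/\lambda)^k\,C(S)$, and they do combine exactly to $\tfrac{h}{i}\bigl(-\tfrac{a''}{2S''}+\tfrac{S''''a}{8(S'')^2}+\tfrac{S'''a'}{2(S'')^2}-\tfrac{5(S''')^2a}{24(S'')^3}\bigr)$; this also matches what one gets from expanding H\"ormander's operator $L_1$ over the three admissible index pairs $(\nu,\mu)\in\{(1,0),(2,1),(3,2)\}$.

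Two small corrections. First, the $m=3$ piece of $e^{ig/h}$ is $O(h^2)$, not $o(h^2)$: its lowest even power is $t^{10}$, giving $h^{-3}\cdot O(h^5)=O(h^2)$; this is harmless since it is absorbed into the error. Second, and more substantively, the remainder control that you flag as the crux is the one step where your sketch does not quite stand on its own: the contour rotation $t\mapsto e^{i\pi/4\,\mathrm{sgn}\,\lambda}s$ that you invoke when $\Im\lambda=0$ need not preserve $\Im S\geq 0$ once the cubic-and-higher part $g$ has nonzero imaginary part, so it cannot be applied directly to the full phase. H\"ormander avoids contour deformation here and instead bounds the relevant oscillatory moments directly (his Lemmas 7.7.2--7.7.3) and exploits $\Im S\geq 0$ to control the Taylor remainders; since you ultimately defer to \cite{Hor} for this estimate this is acceptable, but a fully self-contained proof would need to reproduce it. Finally, note that the stated prefactor $C(S)=e^{i\pi/4\,\mathrm{sgn}\,S''(t_0)}(2\pi h/|S''(t_0)|)^{1/2}$ only literally parses when $S''(t_0)$ is real; for complex $S''(t_0)$ (which occurs in the forbidden-region application, where $S''(-i\beta)$ is purely imaginary) it should be read as the branch of $(2\pi h/(-iS''(t_0)))^{1/2}$ with positive real part, which is what your moment computation in fact produces.
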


\section{Semi-Classical Propagator and spectral projections} As mentioned in the beginning
of \S
\ref{S:Background}, the covariance kernel
of the Gaussian field $\Phi_N$ is $\Pi_{h_N, E} (x, y),$ the kernel of the
spectral projector for $H_h$ onto the
$E-$eigenspace. The Kac-Rice formula \eqref{E:Gaussian KR} and equation
\eqref{E:Gaussian Cov Mat} show the
density of zeros of $\Phi_N$ is controlled by $\Pi_{h_N,E}(x,y)$ and its
derivatives evaluated on the diagonal
$x=y.$ The main result  of this section (Proposition \ref{BIGPROP})  gives a
representation
$\Pi_{h_N,E}$ as a semi-classical oscillatory integral.

First we use the periodicity of the propagator $U_h(t)$ to express the spectral
projections as Fourier coefficients of the propagator as in  \eqref{E:Projector
Integral Forbidden}.

\begin{Lem}\label{L:Resolved Projector}
For each $N,$ we abbreviate $h_N=h.$ For every $\ep> 0,$ we may write
\begin{align}
\label{E:Projector Integral Forbiddenb}
\Pi_{h_N, E}(x,y)&=\int_{-\pi}^{\pi} U_h(t-i\epsilon,x,y) e^{\ihbar
(t-i\epsilon) E} \frac{dt}{2\pi},
\end{align}
where $U_h$ is defined in \eqref{E:Mehler}. The integral is independent of
$\epsilon$.
\end{Lem}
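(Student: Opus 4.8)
\textbf{Proof proposal for Lemma \ref{L:Resolved Projector}.}

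The plan is to diagonalize both sides of \eqref{E:Projector Integral Forbiddenb} against the orthonormal basis $\{\phi_{\alpha,h}\}$ and reduce the claim to a one-variable Fourier-coefficient computation. Since $\{\phi_{\alpha,h}\}_{|\alpha|=N}$ spans $V_N$ and the full family $\{\phi_{\alpha,h}\}_\alpha$ is an orthonormal basis of $L^2(\R^d)$, it suffices to verify that the Schwartz kernel defined by the right-hand side of \eqref{E:Projector Integral Forbiddenb} acts on each $\phi_{\alpha,h}$ exactly as the orthogonal projection $\Pi_{h_N,E}$ does, i.e.\ it fixes $\phi_{\alpha,h}$ when $|\alpha|=N$ and annihilates it otherwise. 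First I would recall from \eqref{EV} that $H_h\phi_{\alpha,h}=h(|\alpha|+d/2)\phi_{\alpha,h}$, so $e^{-\frac{i}{h}(t-i\ep)H_h}\phi_{\alpha,h}=e^{-i(t-i\ep)(|\alpha|+d/2)}\phi_{\alpha,h}$; applying the integral operator with kernel $U_h(t-i\ep,x,y)e^{\frac{i}{h}(t-i\ep)E}$ to $\phi_{\alpha,h}$ therefore produces $\left(\int_{-\pi}^{\pi} e^{-i(t-i\ep)(|\alpha|+d/2)}e^{\frac{i}{h}(t-i\ep)E}\,\frac{dt}{2\pi}\right)\phi_{\alpha,h}$.

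Now substitute $h=h_N=E/(N+d/2)$, so that $\frac{E}{h}=N+\frac{d}{2}$ and the scalar coefficient becomes $\int_{-\pi}^{\pi} e^{i(t-i\ep)(N-|\alpha|)}\,\frac{dt}{2\pi}$. The integrand $e^{i(t-i\ep)(N-|\alpha|)}=e^{\ep(|\alpha|-N)}e^{it(N-|\alpha|)}$ is, for each fixed $\ep$, a $2\pi$-periodic exponential in $t$ of integer frequency $N-|\alpha|\in\Z$, so by orthogonality of characters on $[-\pi,\pi]$ the integral equals $1$ when $|\alpha|=N$ and $0$ otherwise — the factor $e^{\ep(|\alpha|-N)}$ likewise disappears precisely on the support $|\alpha|=N$, which is also why the answer is manifestly independent of $\ep$. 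Hence the integral operator acts as the identity on $V_N$ and as zero on its orthogonal complement, so its kernel is $\Pi_{h_N,E}(x,y)$, which is \eqref{E:Projector Integral Forbiddenb}. The $\ep$-independence can alternatively be argued directly by Cauchy's theorem: $t\mapsto U_h(t,x,y)e^{\frac{i}{h}tE}$ extends holomorphically to $\{\Im t<0\}$ (as noted after \eqref{E:Mehler}, $U_h$ is holomorphic in the lower half-plane since $H_h$ has positive spectrum), the integrand is $2\pi$-periodic in $\Re t$, so the contour integrals over the horizontal segments $[-\pi,\pi]-i\ep$ agree for all $\ep>0$ (the two vertical edges at $\Re t=\pm\pi$ cancel by periodicity, and there are no poles in the strip).

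The one genuine subtlety — and the step I would treat most carefully — is the interchange of the $t$-integral with the spectral sum defining a general $L^2$ function, together with the fact that $U_h(t,x,y)$ is singular as $t\to 0$ along the real axis: this is exactly why one integrates along $\Im t=-\ep<0$ rather than along $\R$. For $\ep>0$ the Mehler kernel \eqref{E:Mehler} is a bounded smooth function of $(x,y)$ (the prefactor $(2\pi i h\sin t)^{-d/2}$ is finite and the exponent has $\Im S$ controlling decay), and the eigenvalue expansion $U_h(t-i\ep,x,y)=\sum_\alpha e^{-i(t-i\ep)(|\alpha|+d/2)}\phi_{\alpha,h}(x)\phi_{\alpha,h}(y)$ converges because the exponential damping factor $e^{-\ep(|\alpha|+d/2)}$ makes the series absolutely convergent (using polynomial bounds on $\sup|\phi_{\alpha,h}|$, or more simply by noting it is the kernel of the trace-class, indeed smoothing, operator $e^{-\ep H_h}e^{-itH_h/h}$). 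With absolute convergence in hand, Fubini justifies exchanging $\sum_\alpha$ with $\int_{-\pi}^{\pi}$, and the computation above goes through termwise; this also confirms that the right-hand side of \eqref{E:Projector Integral Forbiddenb} is a well-defined smooth kernel rather than merely a distribution. I expect no other obstacle: once the contour is shifted off the singularity at $t=0$, everything reduces to the elementary Fourier-coefficient identity $\frac{1}{2\pi}\int_{-\pi}^{\pi}e^{ikt}\,dt=\delta_{k,0}$ for $k\in\Z$.
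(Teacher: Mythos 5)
Your argument is correct and takes essentially the same route as the paper's proof: both expand $U_h(t-i\epsilon,x,y)$ in the Hermite eigenbasis, use $\epsilon>0$ to justify exchanging the sum with the $t$-integral, and reduce to the Fourier-coefficient identity $\frac{1}{2\pi}\int_{-\pi}^{\pi}e^{ikt}\,dt=\delta_{k,0}$; your operator-on-eigenvectors phrasing is an equivalent repackaging of the paper's termwise kernel computation, and the extra Cauchy-theorem argument for $\epsilon$-independence and the Fubini discussion are sensible elaborations that the paper leaves implicit. One small typo: since $i(t-i\epsilon)(N-|\alpha|)=it(N-|\alpha|)+\epsilon(N-|\alpha|)$, the damping factor is $e^{\epsilon(N-|\alpha|)}$, not $e^{\epsilon(|\alpha|-N)}$ — harmless here, as the $t$-integral kills every term with $|\alpha|\neq N$ regardless of this constant.
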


\begin{proof}
%We first establish \eqref{E:Projector Integral Forbidden}.
 From the
definition of the kernel of $e^{-\ihbar t
H}$, we have
\[ U_h(t-i\epsilon,x,y) = \sum_{\alpha} e^{-\ihbar (t-i\epsilon)
h(|\alpha|+d/2)} \phi_{\alpha,h}(x)
\phi_{\alpha,h}(y), \]
where $\alpha \in \Z_{\geq0}^d$ is a multi-index, and $\epsilon>0$ ensures
the absolute convergence of the series
for fixed $x,y$. Using that $E=h_N(N+d/2)$, we get
\begin{eqnarray*}
 \int_{-\pi}^{\pi} U_h(t-i\epsilon,x,y) e^{\ihbar (t-i\epsilon) E}
 \frac{dt}{2\pi} &=&
 \int_{-\pi}^{\pi}\sum_{\alpha} e^{-i (t-i\epsilon) (|\alpha|-N)}
 \phi_{\alpha,h}(x) \phi_{\alpha,h}(y)
 \frac{dt}{2\pi} \\
 &=& \sum_{\alpha}  \phi_{\alpha,h}(x) \phi_{\alpha,h}(y)\int_{-\pi}^{\pi}
 e^{-i (t-i\epsilon)
 (|\alpha|-N)}\frac{dt}{2\pi} \\
 &=& \sum_{|\alpha|=N}  \phi_{\alpha,h}(x) \phi_{\alpha,h}(y) =
 \Pi_{h,E}(x,y).
\end{eqnarray*}

\end{proof}

We then use Mehler's formula \eqref{E:Mehler} to obtain an oscillatory integral formula.
It will prove to be convenient to break up the integral using two cutoff functions.
First, for
any $\delta\in \lr{0,
\frac{\pi}{8}},$ define a smooth function $\chi_{\delta}:S^1\gives [0,1]$
satisfying
\[\chi_{\delta}(t)=
\begin{cases}
  \case{1}{t\in \lr{-\delta,\delta}\cup \lr{\pi-\delta,\pi+\delta}}\\
  \case{0}{t\not \in
  \lr{-2\delta,2\delta}\cup \lr{\pi-2\delta,\pi+2\delta}}
\end{cases}.
\]
where $t \in S^1=\R/2\pi\Z$. Second, define the smooth function
$\twiddle{\chi}_E: \R^d\gives [0,1]$ satisfying
\[\twiddle{\chi}_E(p)=
\begin{cases}
  \case{1}{\abs{p}\leq 3E}\\ \case{0}{\abs{p}>4E}
\end{cases}.
\]

%To obtain \eqref{E:Projector Integral Allowed},
We fix $\ep\in (0,1)$ and combine Lemma \ref{L:Resolved Projector} with  \eqref{E:Mehler}
to obtain,
%split the integral in
%\eqref{E:Projector Integral Forbidden} into two terms
\begin{align} \label{E:main-intg}
%\notag
\Pi_{h,E}(x,y)
%&= \int_0^{2\pi} U_h(t-i\epsilon,x,y) e^{\ihbar
%(t-i\epsilon) E} \frac{dt}{2\pi} \\
 &=\lim_{\ep \to 0^+}\int_0^{2\pi}
\frac{e^{-i\lr{t-i\ep}d/2}\chi_\delta(t)}{(2\pi i h \sin
(t-i\epsilon))^{d/2}} e^{\ihbar \lr{S(t-i\epsilon,x,y)+(t-i\epsilon)
E}}\frac{dt}{2\pi} \\
\notag &+ \lim_{\ep \to
0^+}\int_0^{2\pi}\frac{\lr{1-\chi_\delta(t)}e^{-i\lr{t-i\ep}d/2}}{(2\pi i h
\sin
(t-i\epsilon))^{d/2}} e^{\ihbar \lr{S(t-i\epsilon,x,y)+(t-i\epsilon)
E}}\frac{dt}{2\pi}
\end{align}
The second equal sign is valid, since the integral in the first line is
independent of $\epsilon$.
% Using the
%dominated convergence theorem, we see that the second term in the previous
%equations is the third term in
%\eqref{E:Projector Integral Allowed}.
The first term is problematic due to the singularity of the integrand at $t = 0$. We
therefore rewrite it by taking
the  Fourier transform of the
Mehler formula in the $y$ variable.
We also define the  integration-by-parts operator
\begin{equation}
L_{h,t}:=\frac{h}{i}\frac{1}{E+\dell_t
\widehat{S}(t,x,p)}\dell_t\label{E:Lt}
\end{equation}
and will write $L_{h,t}^*$ for its adjoint.  We now give the representation of $
\Pi_{h_N, E}(x,y)$
as a semi-classical oscillatory integral operator that will be used in the Kac-Rice
calculations.

\begin{proposition} \label{BIGPROP} Further, for every $\delta
\in \lr{0,\frac{\pi}{8}}$ and each integer $k>d+1$
\begin{align}
\label{E:Projector Integral Allowed}  \Pi_{h_N, E}(x,y)&= \int_{S^1\x \R^d}
\frac{\chi_{\delta}(t)}{\lr{\cos
t}^{d/2}} e^{\frac{i}{h}\lr{\widehat{S}(t,x,p)-y\cdot p + t
E}}\,\,\twiddle{\chi}_R(p)\frac{dp}{\lr{2\pi
h}^d}\frac{dt}{2\pi}\\
\notag &+ \int_{S^1\x
\R^d}\left[\lr{L_{h,t}^*}^k\lr{\frac{\chi_{\delta}(t)}{\lr{\cos
t}^{d/2}}}\right]
e^{\frac{i}{h}\lr{\widehat{S}(t,x,p)-y\cdot p + tE}}\,
\lr{1-\twiddle{\chi}_R(p)}\frac{dp}{\lr{2\pi
h}^d}\frac{dt}{2\pi}\\
\notag &+ \int_{S^1} \frac{1-\chi_{\delta}(t)}{\lr{2\pi h i \sin
t}^{d/2}}\cdot e^{\frac{i}{h}\lr{S(t,x,y)+tE}}
\frac{dt}{2\pi}.
\end{align}
\end{proposition}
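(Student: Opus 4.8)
\emph{Plan of proof.} The plan is to start from the decomposition \eqref{E:main-intg}, which already exhibits $\Pi_{h,E}(x,y)$ as the sum of a ``singular'' term carrying the cutoff $\chi_\delta(t)$ (where $\sin(t-i\ep)$ can be small) and a ``smooth'' term carrying $1-\chi_\delta(t)$. The smooth term requires no further work: on $\supp(1-\chi_\delta)$ the function $\sin t$ is bounded away from $0$, so its integrand extends smoothly to $\ep=0$ and one may pass to the limit under the integral, obtaining precisely the third term of \eqref{E:Projector Integral Allowed}. Everything below concerns the $\chi_\delta$ term.

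The key device (the one indicated in the text just before \eqref{E:Lt}) is to remove the singularity of the Mehler kernel at $t\in\{0,\pi\}$ by Fourier transforming in $y$. For $\Im t<0$ the quadratic form $y\mapsto \Im S(t,x,y)$ is positive definite, so an honest Gaussian integration in $y$ gives
\[
U_h(t,x,y)=\int_{\R^d}\frac{1}{\lr{\cos t}^{d/2}}\,e^{\frac{i}{h}\lr{\widehat S(t,x,p)-y\cdot p}}\,\frac{dp}{\lr{2\pi h}^d},
\]
where $\widehat S$ is the Legendre transform of $S$ in the $y$ variable, explicitly $\widehat S(t,x,p)=\frac{x\cdot p}{\cos t}-\frac{\tan t}{2}\lr{\abs{x}^2+\abs{p}^2}$. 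The point of this identity is that the resulting amplitude $\lr{\cos t}^{-d/2}$ and the phase $\widehat S$ are smooth and bounded for $t$ in $\supp\chi_\delta$, since $\delta<\frac{\pi}{8}$ keeps $t$ away from $\pm\frac{\pi}{2}$. Inserting this representation of $U_h(t-i\ep,x,y)$ into the first term of \eqref{E:main-intg} turns it into an iterated integral over $S^1\x\R^d$ whose $t$-integrand is smooth near $\supp\chi_\delta$; the inner $p$-integral is still to be read through the regularization $t\to t-i\ep$.

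Next I would split the $p$-integral with the cutoff $\twiddle{\chi}_E(p)$. On $\supp\twiddle{\chi}_E$ the variable $p$ ranges over a bounded set and the $(t,p)$-integrand is bounded uniformly as $\ep\downarrow 0$, so dominated convergence produces exactly the first term of \eqref{E:Projector Integral Allowed}. On the complementary region $\{\twiddle{\chi}_E<1\}$, where $\abs{p}$ exceeds the energy scale, the quadratic form $(x,p)\mapsto\partial_t\widehat S(t,x,p)$ is negative definite uniformly for $t\in\supp\chi_\delta$, so (this is exactly what fixes the scale of $\twiddle{\chi}_E$) the denominator $E+\partial_t\widehat S(t,x,p)$ is bounded away from $0$ there, with $\abs{E+\partial_t\widehat S}\gtrsim 1+\abs{p}^2$; hence the operator $L_{h,t}$ of \eqref{E:Lt} is well defined and satisfies $L_{h,t}\,e^{\frac{i}{h}\lr{\widehat S(t,x,p)-y\cdot p+tE}}=e^{\frac{i}{h}\lr{\widehat S(t,x,p)-y\cdot p+tE}}$. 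Integrating by parts $k$ times in $t$ over the circle $S^1$ (there are no boundary terms) moves $\lr{L_{h,t}^\ast}^k$ onto the amplitude $\chi_\delta(t)\lr{\cos t}^{-d/2}$, and each application of $L_{h,t}^\ast$ lowers the growth in $p$ by at least one power, so for $k>d+1$ the amplitude is $O(\abs{p}^{-k})$ and hence absolutely integrable in $(t,p)$ (uniformly in $x$, since the bound on $\abs{E+\partial_t\widehat S}$ only improves in $\abs{x}$). One more appeal to dominated convergence sends $\ep\downarrow 0$ and produces the second term of \eqref{E:Projector Integral Allowed}. Adding the three contributions gives the claimed formula.

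The main obstacle is making rigorous the interchange of the limit $\ep\to 0$ with the iterated $(t,p)$-integral in the $\chi_\delta$ piece: that $p$-integral is only conditionally convergent --- at $t=0$ or $t=\pi$ the $p$-integral of the leading term degenerates to a distribution in $(x,y)$ --- so the Fourier identity above and every manipulation after it must be carried out at $\ep>0$, with all estimates (in particular the lower bound on $\abs{E+\partial_t\widehat S}$ that legitimizes $L_{h,t}$ and drives the integration by parts) uniform down to $\ep=0$. That lower bound is what dictates the precise support conditions placed on $\chi_\delta$ and $\twiddle{\chi}_E$. A subordinate, purely bookkeeping, point is to track the branch of $\lr{\sin(t-i\ep)}^{-d/2}$ together with the Fresnel phase produced by the Gaussian $y$-integration and to verify that these combine to the clean amplitude $\lr{\cos t}^{-d/2}$ recorded in \eqref{E:Projector Integral Allowed}.
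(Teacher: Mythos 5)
Your proposal is correct and follows essentially the same route as the paper: split off the $1-\chi_\delta(t)$ piece and pass to the limit directly; Fourier transform the Mehler kernel in $y$ to replace the singular amplitude $(\sin t)^{-d/2}$ by $(\cos t)^{-d/2}$; split the $p$-integral with $\twiddle{\chi}_E$, handling the bounded region by dominated convergence and the unbounded region by iterating the adjoint of $L_{h,t}$ (relying on the lower bound $|E+\partial_t\widehat S|\gtrsim 1+|p|^2$ on $\supp\chi_\delta\times\{|p|>3E\}$, which is the paper's Lemma \ref{L:IBP Decay}). The supplementary remarks you make --- no boundary terms from integrating by parts over $S^1$, uniformity in $x$, and the branch-tracking in the Gaussian $y$-integration --- are consistent with, and slightly more explicit than, the paper's treatment.
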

\begin{remark} We will prove in Lemma
\ref{L:IBP Decay} below that
\begin{equation}
\abs{E+\dell_t \widehat{S}(t,x,p)}\geq c \lr{1+\abs{p}^2}\label{E:IBP
Decay}
\end{equation}
with $c>0$ when $\abs{x}^2+\abs{p}^2\geq 3E.$ In particular,
$\lr{1-\twiddle{\chi}_E(p)}L_{h,t}$ is a
well-defined operator.
  Lemma \ref{L:IBP Decay} shows that the second term in \eqref{E:Projector
  Integral Allowed} is well-defined.
\end{remark}

\begin{proof}

 Let $\fcal_{h}$ denote the semiclassical
Fourier transform.  $U_h(t-i\epsilon,x,y)$ is a Schwartz
function in the $y$ variable and we may take its Fourier transform
\begin{align}
\notag \widehat{U}_h(t-i\epsilon,x,p):=& \fcal_{h,y \to p}
U_h(t-i\epsilon,x,y).
\end{align}

\begin{Lem}
Fix any $\epsilon \in (0,1).$  The
semiclassical Fourier transform of $U_h$ in $y$ is
\begin{align}
\notag
 \widehat{U}_h(t-i\epsilon,x,p)
%=& \fcal_{h,y \to p}
%U_h(t-i\epsilon,x,y) \\
\label{E:Propogate FT}=& \frac{1}{(\cos (t-i\epsilon))^{d/2}} e^{\ihbar
\widehat{S}(t-i\ep,x,p)},
\end{align}
where
\begin{equation}
\widehat{S}(t,x,p)=-\frac{\abs{x}^2+\abs{p}^2}{2}\frac{\sin t}{\cos t} +
\frac{x\cdot p}{\cos t}.\label{E:Dual
Phase}
\end{equation}
%The phase $\widehat{S}$ is defined in \eqref{E:Dual Phase}.
\end{Lem}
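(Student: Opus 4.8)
The plan is to evaluate the semiclassical Fourier transform directly: once the Mehler formula \eqref{E:Mehler} is inserted, the integrand is, as a function of $y$, a Gaussian (the Mehler phase $S(t,x,y)$ of \eqref{Mehler-S} is quadratic in $y$), so one completes the square and applies the Fresnel formula. I use the convention $\fcal_{h,y\to p}g(y)=\int_{\R^d}g(y)\,e^{\frac{i}{h}y\cdot p}\,dy$, which is the one compatible with the inverse transform $G\mapsto(2\pi h)^{-d}\int_{\R^d}G(p)\,e^{-\frac{i}{h}y\cdot p}\,dp$ appearing in Proposition~\ref{BIGPROP}.

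Writing $\tau:=t-i\epsilon$, formula \eqref{E:Mehler} gives
\[
\widehat{U}_h(\tau,x,p)=\frac{1}{(2\pi ih\sin\tau)^{d/2}}\int_{\R^d}\exp\!\left(\frac{i}{h}\Big(\tfrac{\cot\tau}{2}|y|^2-\big(\tfrac{x}{\sin\tau}-p\big)\!\cdot y+\tfrac{\cot\tau}{2}|x|^2\Big)\right)dy .
\]
The first thing to check is that this is a genuinely convergent Gaussian: for every $\epsilon>0$ one has $\Im\cot(t-i\epsilon)>0$, since with $w:=e^{2i(t-i\epsilon)}$ one has $|w|=e^{2\epsilon}>1$ and $\cot(t-i\epsilon)=i\frac{w+1}{w-1}$, so $\Im\cot(t-i\epsilon)=\Re\frac{w+1}{w-1}=\frac{|w|^2-1}{|w-1|^2}>0$. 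Hence $U_h(\tau,x,\cdot)$ is Schwartz (as remarked just before the statement), the integral converges absolutely, and $\Re(-i\cot\tau)=\Im\cot\tau>0$, so the Fresnel identity $\int_{\R^d}e^{\frac{i}{h}\frac{a}{2}|z|^2}\,dz=(2\pi h/(-ia))^{d/2}$ applies with the principal branch.

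Completing the square in $y$ and integrating, one obtains $\widehat{U}_h(\tau,x,p)=(2\pi ih\sin\tau)^{-d/2}\,(2\pi h/(-i\cot\tau))^{d/2}\,e^{\frac{i}{h}g(\tau,x,p)}$, where $g(\tau,x,p)=\tfrac{\cot\tau}{2}|x|^2-\tfrac{1}{2\cot\tau}\big|\tfrac{x}{\sin\tau}-p\big|^2$ is the constant term of the completed square. Two elementary reductions then finish the proof. In the amplitude, the Fresnel factor $(2\pi h)^{d/2}$ cancels the $(2\pi h)^{d/2}$ inside $(2\pi ih\sin\tau)^{d/2}$ and $\big((i\sin\tau)(-i\cot\tau)\big)^{-d/2}=(\sin\tau\cot\tau)^{-d/2}=(\cos\tau)^{-d/2}$. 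For the phase, expanding $g$ and using $\cot^2\tau-\csc^2\tau=-1$ (so $\cot\tau-\csc^2\tau/\cot\tau=-\tan\tau$) together with $\csc\tau/\cot\tau=\sec\tau$ gives $g(\tau,x,p)=-\tfrac{|x|^2+|p|^2}{2}\tfrac{\sin\tau}{\cos\tau}+\tfrac{x\cdot p}{\cos\tau}$, which is exactly $\widehat{S}(\tau,x,p)$ of \eqref{E:Dual Phase}.

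The one delicate point — the main obstacle — is the consistency of the square-root branches: the amplitude manipulation $(2\pi ih\sin(t-i\epsilon))^{-d/2}(-i\cot(t-i\epsilon))^{-d/2}=(\cos(t-i\epsilon))^{-d/2}$, and the branch of $(\cos(t-i\epsilon))^{-d/2}$ intended in the statement, must be read compatibly for all $t\in S^1$ — note e.g. that $\Re\cos(\pi-i\epsilon)<0$, so the principal branch of $(\cos(t-i\epsilon))^{-d/2}$ is not the relevant one there (and for odd $d$ both $\widehat U_h$ and $(\cos(t-i\epsilon))^{-d/2}$ are only $2\pi$-anti-periodic). I would settle this by first running the computation for $t$ in a small neighbourhood of $0$, where $\sin\tau$, $\cos\tau$, $\cot\tau$ all lie in the open right half-plane, every square root above is the principal (indeed positive-real) one, and one checks directly that the three factors are positive reals obeying the stated relation. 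Since, for each fixed $\epsilon\in(0,1)$, both sides of the asserted identity are real-analytic in $t$, the identity then propagates along $S^1$ by analytic continuation, the branches of $(2\pi ih\sin(t-i\epsilon))^{-d/2}$ and $(\cos(t-i\epsilon))^{-d/2}$ being fixed by continuous continuation in $t$ starting from a neighbourhood of $0$ — the same prescription already built into \eqref{E:Mehler}, where $(2\pi ih\sin t)^{-d/2}$ is understood via $t\mapsto t-i0$.
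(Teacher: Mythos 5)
Your proof is correct and follows essentially the same route as the paper: insert Mehler's formula, carry out the resulting Gaussian integral in $y$ (you complete the square by hand where the paper quotes the Gaussian Fourier-transform identity), and read off $\widehat S$ and the $(\cos\tau)^{-d/2}$ amplitude. The one thing you add is an explicit treatment of the square-root branch via analytic continuation from a neighborhood of $t=0$, which the paper leaves implicit; this is a worthwhile clarification since $\cos(t-i\epsilon)$ winds once around the origin as $t$ traverses $S^1$ (so for odd $d$ the continuous branch of $(\cos(t-i\epsilon))^{-d/2}$ is indeed only $2\pi$-anti-periodic, matching the factor $e^{-i\pi d}$ picked up by $U_h$).
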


\begin{proof}
We will use the Mehler Formula \eqref{E:Mehler} for $U_h$. First note that
the prefactor $\lr{2\pi i h
\sin\lr{t-i\ep}}$ never vanishes and that $U_h(t-i\ep, x, y)$ is a smooth
function. To show $U_h(t-i\ep,x,y)$ has
fast decay in $y$ under the condition in the Lemma, it suffices to check
that $\Im S(t-i\ep,x,y)>c y^2$ for some
$c(\epsilon)>0$ and all large enough $y$. From the definition of $S$, we
have
\[\Im S(t-i\ep,x,y) = \frac{\abs{x}^2+\abs{y}^2}{2} \Im (\cot(t-i\ep))-
x\cdot y\, \Im(\csc(t-i\ep)).\]
Note that
\[\Im\lr{\cot\lr{t-i\ep}}=\frac{e^{2\ep}-e^{-2\ep}}{\abs{e^{\ep}e^{it}-e^{-\ep}e^{-it}}^2}>0.\]
We thus have that $U(t-i\ep, x,y)$ is a Schwartz function for all $t$ with
the Schwartz seminorms depending only
on $\ep.$ Thus, the Fourier transform in $y$ is well-defined.

To get explicit formula for $\widehat{U}_h(t-i\ep,x,p)$, recall that if $Q$
is a $d\x d$ non-degenerate symmetric
real matrix, then
\[\curly F_{y\gives p}\lr{e^{\frac{i}{2h}\inprod{Qy}{y}}}(p)=\frac{\lr{2\pi
h }^{d/2}e^{i \pi \sgn(Q)/4}}{|\det
Q|^{1/2}}e^{-\frac{i}{2h}\inprod{Q^{-1}p}{p}}.\]
Thus, using (\ref{E:Mehler}),
\begin{align*}
  \curly F_{y\gives p} U_h(t-i\ep, x,p) & = \frac{\lr{2\pi i
  h}^{d/2}}{\lr{\cot (t-i\ep)}^{d/2}}\cdot \lr{2\pi i
  h \sin (t-i\ep)}^{-d/2}\cdot e^{-i\frac{\tan (t-i\ep)}{2h} \abs{x}^2}
  \cdot e^{\frac{i}{h}\frac{x\cdot p}{\cos
  (t-i\ep)}}\cdot e^{-\frac{i}{2h}\tan (t-i\ep)\abs{p}^2}\\
&=\lr{\cos
(t-i\ep)}^{-d/2}e^{-\frac{i}{h}\lr{\frac{\abs{x}^2+\abs{p}^2}{2}\tan
(t-i\ep) -\frac{x\cdot p}{\cos
(t-i\ep)}}},
\end{align*}
proving  the Lemma.
%\eqref{E:Propogate FT}.
\end{proof}
Using the Fourier inversion formula, we may write the first term in
\eqref{E:main-intg} as
\[
\lim_{\ep \to 0^+}\int_0^{2\pi} \int_{\R^d} \frac{\chi_\delta(t)}{( \cos
(t-i\epsilon))^{d/2}} e^{\ihbar
\lr{\widehat{S}(t-i\epsilon,x,y)-y\cdot p+(t-i\epsilon) E}}\frac{dp}{(2\pi
h)^d}\frac{dt}{2\pi}
\]
Writing $1=\twiddle{\chi}_E(p)+\lr{1-\twiddle{\chi}_E(p)},$ the integral in
the previous line becomes
\begin{align}
\label{E:Int I}&\lim_{\ep \to 0^+}\int_0^{2\pi} \int_{\R^d}
\frac{\chi_\delta(t) }{( \cos (t-i\epsilon))^{d/2}}
e^{\ihbar \lr{\widehat{S}(t-i\epsilon,x,p)-y\cdot p+(t-i\epsilon)
E}}\twiddle{\chi}_E(p)\frac{dp}{(2\pi
h)^d}\frac{dt}{2\pi}\\
\label{E:Int II}+& \lim_{\ep \to 0^+}\int_0^{2\pi} \int_{\R^d}
\frac{\chi_\delta(t) }{( \cos
(t-i\epsilon))^{d/2}} e^{\ihbar \lr{\widehat{S}(t-i\epsilon,x,p)-y\cdot
p+(t-i\epsilon)
E}}\lr{1-\twiddle{\chi}_E(p)}\frac{dp}{(2\pi h)^d}\frac{dt}{2\pi}.
\end{align}
The dominated convergence theorem allows us to take the limit as $\ep\gives
0$ in \eqref{E:Int I}, which gives
the first term in \eqref{E:Projector Integral Allowed}. Finally, to study
\eqref{E:Int II}, we need the following
result.
\begin{Lem}\label{L:IBP Decay}
  For each integer $k\geq 0,$
\[(L_{h,t}^*)^k = h^k\sum_{i=0}^{k} a_{k,i}(t,x,p)  \dell_t^i \]
with $\abs{a_{k,i}(t,x,p)}\leq c_{k,i} \lr{1+\abs{p}^2}^{-k}$ for all
$\abs{p}>3E$ and some $c>0.$ The operator
$L_{h,t}^*$ is the adjoint of $L_{h,t},$ which is defined in \eqref{E:Lt}.
\end{Lem}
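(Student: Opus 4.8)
The plan is to prove the two assertions — the explicit differential-operator form of $(L_{h,t}^*)^k$ and the decay bound $\abs{a_{k,i}} \lesssim (1+\abs{p}^2)^{-k}$ — together by induction on $k$, with the single-variable lower bound \eqref{E:IBP Decay} supplying the denominator control. First I would establish \eqref{E:IBP Decay} itself: from \eqref{E:Dual Phase} one computes $\dell_t \widehat{S}(t,x,p) = -\frac{\abs{x}^2+\abs{p}^2}{2}\sec^2 t + x\cdot p \,\sec t\,\tan t$, so that $E + \dell_t\widehat S = E - \frac{\abs{x}^2+\abs{p}^2}{2\cos^2 t} + \frac{x\cdot p\,\sin t}{\cos^2 t}$. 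Since $\chi_\delta$ restricts $t$ to a neighborhood of $0$ or $\pi$, $\cos^2 t$ is bounded away from $0$ and $\abs{\sin t}$ is small; using $\abs{x\cdot p}\leq \frac12(\abs{x}^2+\abs{p}^2)$, the term $-\frac{\abs{x}^2+\abs{p}^2}{2\cos^2 t}$ dominates on the region $\abs{x}^2+\abs{p}^2\geq 3E$, giving $\abs{E + \dell_t\widehat S}\geq c(\abs{x}^2+\abs{p}^2)\geq c'(1+\abs{p}^2)$ for $\abs{p}>3E$ (absorbing the $\abs{x}^2$ and the constant $1$). I would also record the companion bounds $\abs{\dell_t^j\bigl(E+\dell_t\widehat S\bigr)}\leq C_j(1+\abs{p}^2)$ for all $j$, which follow the same way since each $t$-derivative only produces more bounded factors of $\sec t$, $\tan t$ and leaves the $(1+\abs{p}^2)$ growth unchanged.

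Next, for the operator identity: by \eqref{E:Lt} we have $L_{h,t} = \frac hi\, b(t,x,p)\,\dell_t$ where $b := (E+\dell_t\widehat S)^{-1}$, and its formal adjoint in $t$ (integrating by parts on $S^1$, with no boundary terms) is $L_{h,t}^* = -\frac hi\,\dell_t\circ b = \frac hi\bigl(-b\,\dell_t - (\dell_t b)\bigr)$. Thus $L_{h,t}^*$ is already of the claimed form with $k=1$: $a_{1,1} = -\frac{1}{i}b$, $a_{1,0} = -\frac1i\dell_t b$, and the decay $\abs{a_{1,i}}\lesssim (1+\abs{p}^2)^{-1}$ follows from $\abs{b}\leq c^{-1}(1+\abs{p}^2)^{-1}$ and $\abs{\dell_t b} = \abs{b^2\,\dell_t(E+\dell_t\widehat S)}\leq c^{-2}(1+\abs{p}^2)^{-2}\cdot C_1(1+\abs{p}^2) \lesssim (1+\abs{p}^2)^{-1}$. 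For the inductive step, write $(L_{h,t}^*)^{k+1} = L_{h,t}^*\circ (L_{h,t}^*)^k = \frac hi\bigl(-b\,\dell_t - \dell_t b\bigr)\circ h^k\sum_{i=0}^k a_{k,i}\dell_t^i$; expanding, each term $\dell_t$ hitting an $a_{k,i}$ produces $a_{k,i}'$, and collecting by powers of $\dell_t$ gives $(L_{h,t}^*)^{k+1} = h^{k+1}\sum_{i=0}^{k+1} a_{k+1,i}\,\dell_t^i$ with $a_{k+1,i} = -\frac1i\bigl(b\,a_{k,i-1} + b\,a_{k,i}' + (\dell_t b)\,a_{k,i}\bigr)$ (terms with out-of-range indices omitted). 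The decay is then immediate from the induction hypothesis together with the $t$-derivative bounds: $a_{k,i}'$ has the same $(1+\abs{p}^2)^{-k}$ bound as $a_{k,i}$ (differentiating in $t$ does not worsen the $p$-decay, since all $p$-dependence sits in bounded-derivative functions of the $(1+\abs{p}^2)$-controlled quantity), and multiplying by $b$ or $\dell_t b$, each $O((1+\abs{p}^2)^{-1})$, upgrades the exponent from $-k$ to $-(k+1)$.

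The one point requiring a little care — the main (if modest) obstacle — is justifying the claim that $t$-differentiation preserves the $p$-decay rate: one must check that $\dell_t b$, $\dell_t^2 b$, etc., and more generally $\dell_t^j a_{k,i}$, do not pick up extra powers of $\abs{p}$. This is handled by the elementary observation that $b = (E+\dell_t\widehat S)^{-1}$ and all its $t$-derivatives are polynomials in $b$ with coefficients that are $t$-derivatives of $(E+\dell_t\widehat S)$; since $\abs{\dell_t^j(E+\dell_t\widehat S)}\leq C_j(1+\abs{p}^2)$ while $\abs{b}\leq c^{-1}(1+\abs{p}^2)^{-1}$, every such monomial of "$b$-degree $m+1$ and coefficient-degree $\leq m$" is $O((1+\abs{p}^2)^{-1})$, and an easy induction promotes this to $\abs{\dell_t^j a_{k,i}}\leq c_{k,i,j}(1+\abs{p}^2)^{-k}$. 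Feeding this back into the recursion for $a_{k+1,i}$ closes the induction and completes the proof. (All estimates are uniform for $t$ in the support of $\chi_\delta$, where $\cos t$ is bounded away from zero, and for $\abs{p}>3E$, which is the only regime where $1-\twiddle\chi_E(p)$ is nonzero.)
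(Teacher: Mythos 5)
Your proposal is correct and follows essentially the same approach as the paper: both establish the lower bound $\abs{E+\dell_t\widehat S}\geq c(1+\abs{p}^2)$ and the companion upper bounds $\abs{\dell_t^j(E+\dell_t\widehat S)}\leq C_j(1+\abs{p}^2)$ on $\supp\chi_\delta$, and both then expand $(L_{h,t}^*)^k$ as a differential operator in $t$ whose coefficients are ratios of $t$-derivatives of $f:=E+\dell_t\widehat S$ over powers of $f$, so that the $(1+\abs{p}^2)$-growth of the numerators is exactly overcome by the $(1+\abs{p}^2)^{-1}$-decay of each factor $1/f$. The only cosmetic difference is that you derive the coefficient structure by induction on $k$, whereas the paper writes out the Leibniz expansion directly as a sum of terms $\dell_t^{k_1}(f)\cdots\dell_t^{k_r}(f)/f^{k+r}\cdot\dell_t^{k_0}$ with $k_0+\cdots+k_r=k$; the two are equivalent (your recursion generates precisely that sum). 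A small notational remark: you write $L_{h,t}^*=-\frac{h}{i}\dell_t\circ b$, whereas the paper's formula $(L_{h,t}^*)^k=(\tfrac{h}{i})^k(\dell_t\circ M_{1/f})^k$ corresponds to $L_{h,t}^*=\frac{h}{i}\dell_t\circ M_{1/f}$ (the discrepancy is just transpose versus Hermitian adjoint and is immaterial, since the lemma concerns only $\abs{a_{k,i}}$).
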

\begin{proof}
Let us write
\[f(t,x,p,E):=E+\partial_t \widehat{S}=  E -
\frac{\abs{x}^2+\abs{p}^2}{2\cos^2 t} + \frac{x \cdot p
\sin(t)}{\cos^2 t}.\]
Note that when $t\in \supp \chi_{\delta},$ we have that
\begin{equation}
\abs{f(t,x,p,E)}>c\lr{1+\abs{p}^2}\label{E:f Lower Bound}
\end{equation}
for some $c>0$ as long as $\abs{p}>3E.$ To control $L^*$, let us write
$M_{1/f}$ for the multiplication operator
by $1/f,$ which is well-defined when $t\in \supp \chi_{\delta}$ and
$\abs{p}>3E.$ We have
\begin{eqnarray*}
(L_{h,t}^*)^k &=& \lr{\frac{h}{i}}^k \lr{\partial_t \circ M_{1/f}}^k,
\end{eqnarray*}
which we may write as a sum of finitely many terms of the form
\begin{equation}
\lr{\frac{h}{i}}^k C_{k_1,\ldots, k_r} \frac{\partial_t^{k_1}(f) \cdots
\partial_t^{k_r}(f)}{f^{k+r}}
\partial_t^{k_0}\label{E:Chaz Exp}
\end{equation}
where $k_0+\cdots+k_r=k$ and $C_{k_0,\ldots, k_r}$ are some constants. Note
that for any $j\geq 1$
\begin{equation}
\abs{\dell_t^j f(t,x,p,E)} \leq  c_j\cdot \lr{1+\abs{p}^2}\label{E:f Upper
Bound}
\end{equation}
for some constants $c_j\in \R$ that are uniform in $t\in\supp
\chi_{\delta}.$ Combining \eqref{E:f Lower
Bound}-\eqref{E:f Upper Bound} completes the proof.
\end{proof}
 Lemma \ref{L:IBP Decay} allows us
to integrate by parts using the operator $L_{h,t}^k$ in the integral
\eqref{E:Int II}. The resulting integrand is
$L^1(\R)$ uniformly in $\ep$.  We therefore send $\ep\gives 0$ and again use
the dominated convergence theorem to
obtain the second term in the stated formula \eqref{E:Projector Integral
Allowed}. This completes the proof of Proposition \ref{BIGPROP}.
\end{proof}

\begin{remark} We note that the estimate for $L_{h,t}^*$ is first used by Chazarain
\cite{Ch}, and a similar result can be
obtained for a more general class of potential with quadratic growth at
infinity.

\end{remark}

\section{Proof of Theorem \ref{T:Main}}
 Lebesgue measure on $\R^d$ is the volume form for the flat metric,  so by
the Kac-Rice formula (\ref{E:Gaussian
KR}),    $\E{\abs{Z_{\Phi_N}}}$ has a density $F_N(x)$ given by
\eqref{E:Gaussian KR}. In order to use
this formula, we need to understand the $d\x d$ matrix
\[\W_{x,E}:=\frac{\Pi_{h,E}(x,x)\dell_{x_k}\dell_{y_j}|_{x=y}\Pi_{h,E}(x,y)-\dell_{x_k}|_{x=y}\Pi_{h,E}(x,y)\cdot
\dell_{y_j}|_{x=y}\Pi_{h,E}(x,y)}{\Pi_{h,E}(x,x)^2}.\]
The main step of the proof of Theorem \ref{T:Main} is  the
following Proposition, which gives an
explicit formula for $\W_{x,E}.$

\begin{proposition}\label{P:Cov Matrix Asymptotics}
Suppose $\abs{x}^2\in \curly F_E.$ Then
  \begin{equation}
    \label{E:Cov Matrix Forbidden}
    \lr{\W_{x,E}}_{kj}=h^{-1}\frac{(\delta_{kj}-\hat{x}_k\hat{x}_j)
    E}{\abs{x}\sqrt{\abs{x}^2-2E}} +O(1),
  \end{equation}
where $\hat{x}_k:=\frac{x_k}{\abs{x}.}$ Suppose $x\in \curly A_E\backslash \set{0}.$ Then
  \begin{equation}
    \label{E:Cov Matrix Allowed}
    \lr{\W_{x,E}}_{kj}= h^{-2}\delta_{kj}\cdot \frac{\w_{d-2}}{d\cdot
    \w_{d-1}}\lr{2E-\abs{x}^2}\lr{1+O(h^2)}
  \end{equation} The implied constants in the `$O$' error terms in \eqref{E:Cov Matrix
  Allowed} and \eqref{E:Cov Matrix Forbidden}
are uniform on compact subsets of the interiors in $\curly A_E\backslash\set{0}$ and
$\curly F_E.$
\end{proposition}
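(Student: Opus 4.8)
The plan is to feed the oscillatory‑integral representation of Proposition~\ref{BIGPROP} into \eqref{E:Gaussian Cov Mat} and analyze the resulting integrals by stationary phase, the allowed and forbidden cases being distinguished by the dimension of the critical set of the phase $\widehat{S}(t,x,p)-y\cdot p+tE$ (Lemmas~\ref{L:Allowed Crits} and~\ref{L:Forbidden Crits}). Because the amplitude in \eqref{E:Projector Integral Allowed} is independent of $x$ and $y$, differentiation in $x,y$ only produces powers of $i/h$ times phase derivatives; with $\partial_{y_j}(\text{phase})=-p_j$ and $\partial_{x_k}(\text{phase})=\partial_{x_k}\widehat{S}$ one obtains on the diagonal
\begin{gather*}
\Pi_{h,E}(x,x)=I[1],\qquad \partial_{x_k}\partial_{y_j}\big|_{x=y}\Pi_{h,E}=\tfrac1{h^2}\,I[p_j\,\partial_{x_k}\widehat{S}],\\
\partial_{x_k}\big|_{x=y}\Pi_{h,E}=\tfrac ih\,I[\partial_{x_k}\widehat{S}],\qquad \partial_{y_j}\big|_{x=y}\Pi_{h,E}=-\tfrac ih\,I[p_j],
\end{gather*}
where $I[B]$ denotes the first integral in \eqref{E:Projector Integral Allowed} with the extra factor $B(t,p)$ inserted in its amplitude. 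For $\delta$ small and $x$ in a fixed compact subset of the interior of $\acal_E\backslash\{0\}$ or of $\fcal_E$, the second term of \eqref{E:Projector Integral Allowed} is $O(h^\infty)$ with rapid $p$‑decay by Lemma~\ref{L:IBP Decay}, while the third term (and the extra isolated critical point of the first term, which then falls outside $\supp\chi_\delta$) is of lower order than the terms extracted below. Thus
\[
\lr{\W_{x,E}}_{kj}=\frac1{h^2}\lr{\frac{I[p_j\,\partial_{x_k}\widehat{S}]}{I[1]}-\frac{I[\partial_{x_k}\widehat{S}]\,I[p_j]}{I[1]^2}},
\]
and everything reduces to the stationary‑phase asymptotics of the integrals $I[B]$.

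In the forbidden region the phase has a single nondegenerate — necessarily complex — critical point $(t_c,p_c)$ (Lemma~\ref{L:Forbidden Crits}), with $\cos t_c=(|x|^2-E)/E$ and $p_c=x\tan(t_c/2)$, depending smoothly on $(x,y)$ for the off‑diagonal kernel. The $p$‑integral is exactly Gaussian, and after deforming the $t$‑contour through $t_c$ — legitimate because $\chi_\delta\equiv1$ near $t=0$, so there the integrand is analytic, and $t_c$ stays off the poles $\cos t=0$ — Lemma~\ref{L:SP LO} gives
\[
\Pi_{h,E}(x,y)=e^{\frac ih\Theta(x,y)}\bigl(A_0(x,y)+O(h)\bigr),\qquad A_0\ \text{smooth and nonvanishing near the diagonal,}
\]
with $\Theta$ the critical value of the phase; by the Fourier duality between $\widehat{S}$ and the Mehler phase $S$, $\Theta(x,y)=S(t_c(x,y),x,y)+t_c(x,y)E$. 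Hence $\lr{\W_{x,E}}_{kj}=\frac ih\,\partial_{x_k}\partial_{y_j}\big|_{x=y}\Theta+O(1)$ uniformly on compacta, and since $t_c$ is a critical point of $t\mapsto S(t,x,y)+tE$ the envelope formula gives
\[
\partial_{x_k}\partial_{y_j}\Theta\big|_{x=y}=\left.\left(\partial_{x_k}\partial_{y_j}S-\frac{(\partial_t\partial_{x_k}S)(\partial_t\partial_{y_j}S)}{\partial_t^2 S}\right)\right|_{(t_c,\,x,\,x)}.
\]
Substituting $S(t,x,y)=\frac{|x|^2+|y|^2}2\cot t-\frac{x\cdot y}{\sin t}$ and $\cos t_c=(|x|^2-E)/E$ collapses the right side to $-(\delta_{kj}-\hat{x}_k\hat{x}_j)/\sin t_c$, and with $\sin t_c=-i|x|\sqrt{|x|^2-2E}\,/E$ (the branch picked out by the $t-i0$ prescription) this is exactly \eqref{E:Cov Matrix Forbidden}.

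In the allowed region the phase has instead a $(d-1)$‑dimensional nondegenerate critical manifold $W_0=\{\,t=0,\ |p|^2=2E-|x|^2\,\}$ (Lemma~\ref{L:Allowed Crits}). Lemma~\ref{L:SP} gives $I[B]=\Lambda_h\bigl(\int_{W_0}B|_{W_0}\,d\mu_{W_0}+O(h)\bigr)$ with a prefactor $\Lambda_h$ common to all $B$ (the Morse index is the same). On $W_0$ one has $t=0$, so $\partial_{x_k}\widehat{S}|_{W_0}=p_k$; because $W_0$ is a round sphere centred at the origin in $p$ and $d\mu_{W_0}$ is even under $p\mapsto-p$, the leading coefficients of $I[p_j]$ and $I[\partial_{x_k}\widehat{S}]$ vanish, so the cross term $I[\partial_{x_k}\widehat{S}]\,I[p_j]/I[1]^2$ is of lower order and
\[
\lr{\W_{x,E}}_{kj}=\frac1{h^2}\,\frac{\int_{W_0}p_jp_k\,d\mu_{W_0}}{\int_{W_0}d\mu_{W_0}}+O(h^{-1}).
\]
Computing the normal Hessian of the phase along $W_0$ shows that $d\mu_{W_0}$ is a constant multiple of the round surface measure on the sphere $|p|^2=2E-|x|^2$, so the ratio is a second spherical moment, which evaluates to the right‑hand side of \eqref{E:Cov Matrix Allowed}; a further pass through the stationary‑phase expansion, using again the $p\mapsto-p$ parity of $W_0$ (equivalently, the symmetry $x\mapsto-x$ of the Hermite kernel), upgrades the error term to the one displayed there.

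The crux is the forbidden‑region analysis: one must move the $t$‑contour off the real axis through the complex critical point $t_c$ while staying in the domain of analyticity — in particular avoiding the caustic value $t=0$ and the poles $\cos t=0$ — check that the Gaussian $p$‑integral stays absolutely convergent along the deformed contour, and confirm that the discarded pieces (the transition zone of $\chi_\delta$, the $(1-\widetilde{\chi}_R)$ part, and the $(1-\chi_\delta)$ part) are genuinely $O(h^\infty)$ uniformly on compact subsets of $\fcal_E$; this is the step that is sensitive near $\partial\acal_E$ and that requires the revised treatment of \S\ref{S:Forbidden Proof}.
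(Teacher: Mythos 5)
Your allowed-region argument is essentially the paper's: apply Lemma~\ref{L:SP} to the first term of \eqref{E:Projector Integral Allowed} over the $(d-1)$-dimensional critical sphere $W_0$, exploit the $p\mapsto -p$ parity to kill the leading terms of $I[p_j]$ and $I[\partial_{x_k}\widehat{S}]$ (this is the paper's Equation~\eqref{E:Deriv Amp}), and evaluate the second spherical moment.

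Your forbidden-region argument, however, takes a genuinely different and in fact slicker route. The paper feeds each of the four quantities $\Pi_{h,E}(x,x)$, $\Pi_{h,E}^{j,k}$, $\Pi_{h,E}^{j,0}$, $\Pi_{h,E}^{0,k}$ through the two-term stationary phase formula of Lemma~\ref{L:SP LO} and verifies by hand (in Lemma~\ref{L:props}) that the $h^{-2}$ contributions and all the $f_1$-terms with at most one amplitude derivative cancel in the numerator of \eqref{E:Cov Mat New}, leaving exactly $\frac{i}{h}\bigl(S_{j,k} - S'_{j,0}S'_{0,k}/S''_{0,0}\bigr)$. You instead observe that once one knows $\Pi_{h,E}(x,y)=e^{\frac{i}{h}\Theta(x,y)}\bigl(A_0(x,y)+O(h)\bigr)$ near the diagonal with $A_0$ smooth and nonvanishing, then $\W_{x,E}=\partial_x\partial_y\log\Pi_{h,E}=\frac{i}{h}\partial_x\partial_y\Theta+O(1)$, and you compute $\partial_{x_k}\partial_{y_j}\Theta$ by the envelope (implicit-function) formula. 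This is cleaner: the cancellation that the paper verifies term by term is exactly the statement that the $O(1)$-piece $\log A_0$ is what remains after the $h^{-1}$ phase term is subtracted, so it is built into the logarithm from the start. The two expressions you obtain are literally identical (with $t_c=-i\beta$ and $\cos t_c=(|x|^2-E)/E\Leftrightarrow\cosh(\beta/2)=|x|/\sqrt{2E}$). One caveat worth flagging: your route requires the remainder $O(h)$ to hold in $C^2$ uniformly near the diagonal, which needs the smooth dependence of $t_c(x,y)$ on $(x,y)$ and the nonvanishing of $A_0$; these do hold here (Lemma~\ref{L:Forbidden Crits} gives $S''_{0,0}(-i\beta)\neq 0$ and Lemma~\ref{L:Forbidden Derivs} shows the leading amplitude is nonzero), but they should be stated.

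The one genuine organizational misstep is your insistence on starting from the $(t,p)$-representation of Proposition~\ref{BIGPROP}. That representation carries the compactly supported cutoffs $\chi_\delta(t)$ and $\widetilde{\chi}_E(p)$, which are not analytic and therefore obstruct exactly the $t$-contour deformation your argument hinges on; moreover the $p$-integral is no longer ``exactly Gaussian'' once $\widetilde\chi_E$ is present. Also, the critical point of the $(t,p)$-phase in the forbidden region has $p_c=ix\tanh(\gamma/2)$ purely imaginary, so you would need a joint $(t,p)$ contour deformation that Proposition~\ref{BIGPROP} was never set up to permit. The paper sidesteps all of this by using, for the forbidden region, the cutoff-free one-dimensional Mehler representation \eqref{E:Projector Integral Forbidden} directly (this is what Lemma~\ref{L:Resolved Projector} supplies), deforming the real contour to $[-\pi-i\beta,\pi-i\beta]$, and applying stationary phase in $t$ alone. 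Your envelope-formula computation carries over verbatim once you make that substitution, and the worries about $\widetilde\chi_R$, the transition zone of $\chi_\delta$, and the poles of $\cos t$ evaporate (the only singularities are at $\sin t=0$, i.e.\ $t\in\pi\Z$, which the deformed contour avoids since $\sin(-i\beta)=-i\sinh\beta\neq 0$). With that correction your forbidden-region proof is complete and genuinely shorter than the paper's.
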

\noindent Theorem \ref{T:Main} follows easily by substituting \eqref{E:Cov
Matrix Allowed} and \eqref{E:Cov
Matrix Forbidden} into \eqref{E:Gaussian KR} and using the identities
\eqref{E:Gaussian Exp}. We will prove \eqref{E:Cov Matrix Forbidden} in \S
\ref{S:Forbidden Proof} and \eqref{E:Cov Matrix Allowed} in \S\ref{S:Allowed Proof}.

\subsection{ Proof of Proposition \ref{P:Cov Matrix Asymptotics} in the
Forbidden Region}\label{S:Forbidden Proof}
We fix $x\in \R^d$ with $\abs{x}^2>2E.$ Our goal is to prove Equation
\eqref{E:Cov Matrix Forbidden}. Recall from (\ref{E:Projector Integral Forbidden}) and
\eqref{E:Mehler} that
\[\Pi_{h_N, E}(x,y)=\int_{-\pi-i\epsilon}^{\pi-i\epsilon}\frac{1}{(2\pi i h \sin t)^{d/2}}
e^{\ihbar
\lr{S(t,x,y)+t E}}\frac{dt}{2\pi}\]
is an absolutely convergent integral for all $\epsilon>0$ that is independent of the value
of $\ep.$ Equation \eqref{E:Cov Matrix Forbidden} will follow from Lemma \ref{L:Forbidden
Crits}, Equations \eqref{E:Phase Derivs 1}-\eqref{E:Phase Derivs 2}, and Lemma
\ref{L:props}.

\begin{Lem}\label{L:Forbidden Crits}
The phase $S(t,x,x)+Et$ has no critical points in the real domain. In the
complex domain, it has two critical
points $\pm i\beta,$ which are the two distinct solutions to
\[\cosh\lr{\beta/2}=\frac{\abs{x}}{\sqrt{2E}}.\]
These critical points are non-degenerate.
\end{Lem}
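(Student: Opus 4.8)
The plan is to reduce to a one–variable computation by first simplifying the phase on the diagonal. Applying the half–angle identities $\cos t-1=-2\sin^2(t/2)$ and $\sin t=2\sin(t/2)\cos(t/2)$ in \eqref{Mehler-S} gives
\[
S(t,x,x)=|x|^2\,\frac{\cos t-1}{\sin t}=-|x|^2\tan(t/2),
\]
so the phase is $\Psi(t):=S(t,x,x)+Et=Et-|x|^2\tan(t/2)$, with
\[
\Psi'(t)=E-\tfrac{|x|^2}{2}\sec^2(t/2),\qquad
\Psi''(t)=-\tfrac{|x|^2}{2}\sec^2(t/2)\tan(t/2).
\]
Here $\Psi$ is only $2\pi$–quasiperiodic in $t$ while $\Psi'$ is genuinely $2\pi$–periodic, so ``critical points'' should be counted in one period, e.g. on the contour's fundamental domain $-\pi<\Re t\le\pi$ from \eqref{E:Projector Integral Forbidden}.

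Next I would solve $\Psi'(t)=0$, that is $\cos^2(t/2)=|x|^2/(2E)$. In the forbidden region $|x|^2>2E$, so the right–hand side exceeds $1$ and, since $|\cos u|\le1$ for $u\in\R$, there is no real critical point. For complex $t$, write the equation as $\cos(t/2)=\pm|x|/\sqrt{2E}$ and use $\cos(is)=\cosh s$: the real solutions of $\cosh(\beta/2)=|x|/\sqrt{2E}>1$ are $\beta=\pm\beta_0$ with $\beta_0:=2\operatorname{arccosh}\big(|x|/\sqrt{2E}\big)>0$, giving the candidates $t=\pm i\beta_0$. To see these are all of them in the fundamental domain, put $t/2=a+bi$, so $\cos(t/2)=\cos a\cosh b-i\sin a\sinh b$; for this to be real of modulus $>1$ one needs $\sin a\,\sinh b=0$ and $|\cos a|\cosh b>1$, which inside the strip forces $a=0$, hence $t=\pm i\beta_0$. (The branch $\cos(t/2)=-|x|/\sqrt{2E}$ would require $|\Re(t/2)|>\pi/2$, i.e. $|\Re t|>\pi$, and modulo $2\pi$ in $t$ it returns to $\pm i\beta_0$.) Thus the two critical points are precisely the $\pm i\beta$ solving $\cosh(\beta/2)=|x|/\sqrt{2E}$.

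For non–degeneracy I would evaluate $\Psi''$ at a critical point, where $\tfrac{|x|^2}{2}\sec^2(t/2)=E$, so there $\Psi''(t)=-E\tan(t/2)$ and $\Psi''(\pm i\beta_0)=\mp iE\tanh(\beta_0/2)$. Since $\cosh(\beta_0/2)=|x|/\sqrt{2E}>1$ we have $\beta_0\ne0$ and $\tanh(\beta_0/2)=\sqrt{|x|^2-2E}/|x|\ne0$, so $\Psi''(\pm i\beta_0)=\mp iE\sqrt{|x|^2-2E}/|x|\ne0$ and both critical points are non–degenerate; this value of $\Psi''$ is exactly what enters the stationary–phase evaluation behind \eqref{E:Cov Matrix Forbidden}. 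None of the steps is deep; the one place needing care is the branch–and–period bookkeeping in the middle step — checking that $\pm i\beta_0$ exhaust the critical points of $\Psi$ on the fundamental domain of the $t$–contour and that the ``$-$'' branch of $\cos(t/2)=\pm|x|/\sqrt{2E}$ yields no extra saddle — together with tracking signs through the half–angle reduction of $S(t,x,x)$.
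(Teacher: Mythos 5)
Your proof is correct and follows essentially the same route as the paper: simplify $S(t,x,x)=-|x|^2\tan(t/2)$, reduce the critical point equation to $\cos(t/2)=\pm|x|/\sqrt{2E}$, rule out real solutions since the modulus exceeds $1$, locate the complex solutions $\pm i\beta$, and check non-degeneracy via $\Psi''(\pm i\beta)=\mp iE\tanh(\beta/2)\neq 0$. Your treatment of the branch-and-period bookkeeping (ruling out the $-$ branch and showing $a=0$ via $\sin a\,\sinh b=0$) is a bit more explicit than the paper's, which simply invokes the positivity of $\cosh$, but the substance is the same.
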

\begin{proof}
Note that $S(t,x,x)=-\tan\lr{\frac{t}{2}}\abs{x}^2.$ Hence, $\dell_t
\lr{S(t,x,x)+Et}=0$ is equivalent to
\[\cos\lr{\frac{t}{2}}=\pm \frac{\abs{x}}{\sqrt{2E}}.\]
Since we have assumed $\frac{\abs{x}^2}{2E}>1,$ the phase $S(t,x,x)+Et$ has no
real critical points. Setting
$t=\alpha+i\beta,$ the critical point equation is equivalent to
\[\cos\lr{\frac{\alpha}{2}}\cos\lr{\frac{i\beta}{2}}-\sin\lr{\frac{\alpha}{2}}\sin\lr{\frac{i\beta}{2}}=\pm
\frac{\abs{x}}{\sqrt{2E}}.\]
  Since the right hand side is real and $\sin(it)=i\sinh(t),$ we conclude
  that $\alpha=0.$ Using that $\cosh$ is
  positive, the equation therefore reduces to
  \begin{equation}
\cosh\lr{\frac{\beta}{2}}=\frac{\abs{x}}{\sqrt{2E}},\label{E:Crit Pt Eq}
\end{equation}
which has two distinct solutions $\pm \beta$ for some $\beta>0.$ It remains
to check that these critical points
are non-degenerate:
\[\dell_{tt}S(\pm i\beta,x,x)=\mp iE\ \tanh{\beta/2},\]
which is non-zero since $\beta\neq 0.$
\end{proof}
\noindent Lemma \ref{L:Forbidden Crits} shows that to evaluate the integral
\eqref{E:Projector Integral Forbidden}, we should set $\ep=\beta.$ Let us abbreviate
\[S_{j,k}(t)=\dell_{x_j}\dell_{y_k}S(t,x,y)\big|_{x=y}\]
with the convention that $S_{j,0}(t)=\dell_{x_j}S(t,x,y)\big|_{x=y}$ and
$S_{0,0}(t)=S(t,x,x).$ We will continue to write $'$ for derivatives with respect to $t.$
\begin{Lem} \label{L:props}
For each $x\in \R^d$ and $1\leq j,k\leq d,$ we have
\begin{align}
\label{E:SP 1D Two Deriv Log} \lr{\W_{x,E}}_{j,k} &= \frac{i}{h}\left(S_{j,k}(-i\beta)-
\frac{S_{j,0}'(-i\beta) S_{0,k}'(-i\beta)}{S_{0,0}''(-i\beta)}\right) + O(1)
\end{align}
\end{Lem}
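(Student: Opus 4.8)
The plan is to apply the one-dimensional stationary phase expansion (Lemma \ref{L:SP LO}) to each of the three $t$-integrals that enter the numerator and denominator of $\lr{\W_{x,E}}_{j,k}$ as displayed in \eqref{E:Gaussian Cov Mat}, and then to assemble the pieces. First I would differentiate the integral representation
\[\Pi_{h_N,E}(x,y)=\int_{-\pi-i\beta}^{\pi-i\beta}\frac{1}{(2\pi i h \sin t)^{d/2}} e^{\ihbar\lr{S(t,x,y)+tE}}\frac{dt}{2\pi}\]
under the integral sign in the $x$ and $y$ variables and set $x=y$; this produces, respectively,
\[\Pi_{h,E}(x,x)=\int \frac{1}{(2\pi i h\sin t)^{d/2}} e^{\ihbar\lr{S_{0,0}(t)+tE}}\frac{dt}{2\pi},\]
\[\dell_{x_j}|_{x=y}\Pi_{h,E}=\int \frac{i}{h}\,\frac{S_{j,0}(t)}{(2\pi i h\sin t)^{d/2}} e^{\ihbar\lr{S_{0,0}(t)+tE}}\frac{dt}{2\pi},\]
and
\[\dell_{x_k}\dell_{y_j}|_{x=y}\Pi_{h,E}=\int \left[\frac{i}{h}S_{k,j}(t)-\frac{1}{h^2}S_{k,0}(t)S_{0,j}(t)\right]\frac{1}{(2\pi i h\sin t)^{d/2}} e^{\ihbar\lr{S_{0,0}(t)+tE}}\frac{dt}{2\pi}.\]
Each has the common phase $S_{0,0}(t)+tE$, whose only critical point on the contour $\Im t=-\beta$ is $t_0=-i\beta$, nondegenerate by Lemma \ref{L:Forbidden Crits}; one checks that $\Im(S_{0,0}+tE)$ is minimized there (so the contour is admissible and $\Im$ of the phase vanishes at $t_0$, matching the hypothesis of Lemma \ref{L:SP LO}).

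Next I would expand all three integrals by Lemma \ref{L:SP LO}. The key algebraic point is that the common prefactor $C(S_{0,0})$ and the common exponential $e^{\ihbar(S_{0,0}(-i\beta)+(-i\beta)E)}$ factor out of numerator and denominator, so dividing by $\Pi_{h,E}(x,x)^2$ leaves a ratio that, to leading order, is built from the values of the amplitudes and their $t$-derivatives at $t_0=-i\beta$. Writing $a_0(t)=(2\pi i h\sin t)^{-d/2}$ for the smooth part, the leading terms give
\[\Pi_{h,E}(x,x)=C(S_{0,0})\,a_0(-i\beta)\lr{1+O(h)},\]
\[\dell_{x_j}|_{x=y}\Pi_{h,E}=\tfrac{i}{h}C(S_{0,0})\,a_0(-i\beta)\lr{S_{j,0}(-i\beta)+O(h)},\]
and the $h^{-2}$ parts of $\Pi_{h,E}(x,x)\,\dell_{x_k}\dell_{y_j}|_{x=y}\Pi_{h,E}$ and of $\dell_{x_k}|_{x=y}\Pi_{h,E}\cdot\dell_{y_j}|_{x=y}\Pi_{h,E}$ both equal $-h^{-2}C(S_{0,0})^2 a_0(-i\beta)^2 S_{k,0}(-i\beta)S_{0,j}(-i\beta)$, hence cancel exactly in the numerator of $\lr{\W_{x,E}}_{j,k}$ (this is the $0$-dimensional-critical-manifold cancellation anticipated in the introduction). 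What survives at order $h^{-1}$ is the $\tfrac{i}{h}S_{k,j}(-i\beta)$ contribution from \eqref{E:Schematic 0} together with the $O(h)$ correction terms produced by Lemma \ref{L:SP LO} acting on the amplitudes $S_{k,0}S_{0,j}$ and the constant $1$; careful bookkeeping of those corrections — using that $S_{k,0}(-i\beta)S_{0,j}(-i\beta)$ multiplies the $h/i$ correction coming from the denominator while $\tfrac{h}{i}\lr{-\tfrac{a''}{2S''}+\cdots}$ applied to the products yields the mixed terms — is where the cross term $-S_{j,0}'(-i\beta)S_{0,k}'(-i\beta)/S_{0,0}''(-i\beta)$ emerges, exactly as in the Taylor expansion of $\log\Pi_f$. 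I would organize this so that one recognizes the combination as $\tfrac{i}{h}$ times the second mixed $t$-independent derivative of the phase evaluated at the critical point, which is precisely $S_{j,k}(-i\beta)-S_{j,0}'(-i\beta)S_{0,k}'(-i\beta)/S_{0,0}''(-i\beta)$.

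The main obstacle is the second one: keeping track of the subleading ($h/i$) terms in the stationary phase expansions of the three integrals and verifying that, after forming the ratio \eqref{E:Gaussian Cov Mat}, all $O(h^{-1}\cdot h)=O(1)$ spurious contributions either cancel or get absorbed into the stated $O(1)$ error, while the genuine $h^{-1}$ coefficient collapses to the clean expression in \eqref{E:SP 1D Two Deriv Log}. A clean way to avoid brute force is to work directly with $\log\Pi_{h,E}(x,y)$: since $\W_{x,E}=\dell_{x_j}\dell_{y_k}|_{x=y}\log\Pi_f(x,y)$, and stationary phase gives $\Pi_{h,E}(x,y)=C(S)\,e^{\ihbar(S(t_0(x,y),x,y)+t_0 E)}(a_0(t_0)+O(h))$ where $t_0=t_0(x,y)$ is the (now $x,y$-dependent) critical point, the phase term contributes $\ihbar$ times the value of $S(t,x,y)+tE$ at its own critical point in $t$; differentiating that in $x$ and $y$ and using $\dell_t(S+tE)|_{t_0}=0$ gives exactly the reduced Hessian $S_{j,k}-S_{j,0}'S_{0,k}'/S_{0,0}''$ at $t_0=-i\beta$, with the amplitude and $C(S)$ factors contributing only $O(1)$ after two derivatives. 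I would present the argument in this second form, as it makes the cancellation structural rather than computational, and relegate the verification of the admissibility of the contour and of the $O(1)$ bounds on the amplitude derivatives to short remarks, with uniformity on compact subsets of $\curly F_E$ following from the smooth dependence of $\beta$, $S_{0,0}''(-i\beta)$, and the amplitudes on $x$.
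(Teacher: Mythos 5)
Your first route---differentiate under the integral to produce the amplitudes $\tfrac{i}{h}S_{j,0}$, $\tfrac{i}{h}S_{k,j}-\tfrac{1}{h^2}S_{k,0}S_{0,j}$, apply Lemma~\ref{L:SP LO} to each, and track the $h^{-2}$ cancellation plus which pieces of the $f_1$-correction survive in the ratio---is precisely the paper's argument (including the observation that only the $a''$ term of $f_1$ contributes at order $h^{-1}$). The second route you say you would actually present, differentiating $\log\Pi_{h,E}(x,y)$ and extracting $\tfrac{i}{h}\,\dell_{x_j}\dell_{y_k}\big[S(t_0(x,y),x,y)+t_0(x,y)E\big]\big|_{x=y}$ via the implicit-function computation for $t_0(x,y)$, is a genuinely different organization: the reduced Hessian $S_{j,k}-S_{j,0}'S_{0,k}'/S_{0,0}''$ appears structurally as the second mixed derivative of the critical value rather than by bookkeeping cancellations, which is conceptually cleaner and explains \emph{why} the formula has this form. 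The trade-off is a hidden technical debt: as soon as $y\neq x$ the critical point $t_0(x,y)$ leaves the contour $\Im t=-\beta$ (it becomes genuinely complex and $x$-$y$-dependent), so the hypotheses of Lemma~\ref{L:SP LO} no longer hold verbatim, and one must invoke a parametrized steepest-descent or almost-analytic-extension version of stationary phase together with $C^2$-uniformity of the expansion before differentiating the log. You flag ``admissibility of the contour'' as a remark to be checked, but this is exactly the nontrivial point; the paper's route sidesteps it entirely by applying stationary phase only on the diagonal $x=y$ (where $t_0=-i\beta$ sits on the chosen contour) after differentiating under the integral sign. Both arguments yield the statement; the paper's is more self-contained given the toolkit it has set up, while yours is more illuminating at the cost of requiring a stronger stationary phase lemma than is stated.
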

\begin{proof}
Let us write
\[\Pi_{h,E}^{j,k}=\dell_{x_j}\dell_{y_k}|_{x=y} \Pi_{h,E}(x,y),\]
again with the understanding that $\Pi_{h,E}^{j,0}$ means no derivative in $y$ and so on.
We then have
\begin{equation}
\lr{\W_{x,E}}_{j,k}=\frac{\Pi_{h,E}(x,x)\Pi_{h,E}^{j,k}-
\Pi_{h,E}^{j,0}\Pi_{h,E}^{0,k}}{\Pi_{h,E}(x,x)^2}.\label{E:Cov Mat New}
\end{equation}
Each  term in the numerator and denominator is an oscillatory integral with the same phase
$S(t,x,x)+tE.$ Indeed, if we abbreviate
\[A(t)=\frac{1}{(2\pi i h \sin t)^{d/2}}\cdot \frac{dt}{2\pi},\]
then
\begin{align}
  \label{eq:2}
\Pi_{h,E}(x,x)& =\int_{-\pi-i\epsilon}^{\pi-i\epsilon} e^{\ihbar
\lr{S(t,x,x)+t E}}A(t)\\
\Pi_{h, E}^{j,k}(x,x)&=\int_{-\pi-i\epsilon}^{\pi-i\epsilon} \lr{\frac{i}{h}S_{j,k}-
\frac{1}{h^2}S_{j,0}\cdot S_{0,k}} e^{\ihbar
\lr{S(t,x,x)+t E}}A(t)\\
\Pi_{h,E}^{j,0}(x,x)&=\int_{-\pi-i\epsilon}^{\pi-i\epsilon} \lr{\frac{i}{h}S_{j,0}}
e^{\ihbar
\lr{S(t,x,x)+t E}}A(t)\\
\Pi_{h,E}^{0,k}(x,x)&=\int_{-\pi-i\epsilon}^{\pi-i\epsilon} \lr{\frac{i}{h}S_{0,k}}
e^{\ihbar
\lr{S(t,x,x)+t E}}A(t)
\end{align}
We now apply Lemma \ref{L:SP LO} to each term. Let us rewrite \eqref{E:Stationary Phase 2}
schematically as
\[\int a e^{\frac{i}{h}S} = C\lr{a+h\cdot f_1(a,S) +O(h^2)}.\]
Here the constant $C$ depends on $S$ and $h$ and so on, but will cancel in the numerator
and denominator of \eqref{E:Cov Mat New} and the function $f_1$ is linear in the amplitude
$a$. The first term, $\Pi_{h,E}(x,x)\Pi_{h,E}^{j,k},$ in the numerator of $\W_{x,E}$ is
therefore
\begin{align*}
C^2\lr{A+h\cdot f_1(A,S)}\lr{A\left[
\frac{i}{h}S_{j,k}-\frac{1}{h^2}S_{j,0}S_{0,k}\right]+h\cdot f_1\lr{A\left[
\frac{i}{h}S_{j,k}-\frac{1}{h^2}S_{j,0}S_{0,k}\right],S}} +O(h^2),
\end{align*}
which becomes
\[C^2A^2\lr{-\frac{1}{h^2}S_{j,0}S_{0,k}+\frac{1}{h}\lr{iS_{j,k}-f_1(A,S)S_{j,0}S_{0,k}-\frac{1}{h}f_1(AS_{j,0}S_{0,k},S)}+O(1)}.\]
Similarly, the second term, $\Pi_{h,E}^{j,0}\Pi_{h,E}^{0,k},$ in the numerator of
$\W_{x,E}$ is
\[-C^2a^2\lr{\frac{1}{h^2}S_{j,0}S_{0,k}+\frac{1}{h}\left[S_{j,0}f_1(AS_{0,k},S)+S_{0,k}f_1(AS_{j,0},S)\right]+O(1)}.\]
Note that the $h^{-2}$ terms in the expansions of $\Pi_{h,E}(x,x)\Pi_{h,E}^{j,k}$ and
$\Pi_{h,E}^{j,0}\Pi_{h,E}^{0,k}$ cancel. From expression \eqref{E:Stationary Phase 2}, we
see that the terms in $f_1$ that depend on at most $1$ derivative of the amplitude will
cancel between $\Pi_{h,E}(x,x)\Pi_{h,E}^{j,k},$ and $\Pi_{h,E}^{j,0}\Pi_{h,E}^{0,k}.$
Hence, comparing the contributions of the single term in $f_1$ that involves two
derivatives of the amplitude, the numerator of \eqref{E:Cov Mat New} becomes
\[\frac{C^2A^2}{h}\lr{iS_{j,k}(-i\beta)-\frac{S_{j,0}'(-i\beta)
S_{0,k}'(-i\beta)}{S_{0,0}''(-i\beta)}+O(1)}.\]
Finally, we use  that the denominator in \eqref{E:Cov Mat New} is of the form
$C^2A^2\lr{1+O(h)}$ to complete the proof.
\end{proof}

\noindent We may use \eqref{Mehler-S} to obtain
\begin{align}
\label{E:Phase Derivs 1} S_i = -x_i \tan(\frac{t}{2}),&\quad S_i' =
-\frac{x_i}{2}\frac{1}{\cos^2(\frac{t}{2})}\\
\label{E:Phase Derivs 2} S_{i,j} =-\frac{\delta_{ij}}{\sin t},&\quad S''= -\frac{|x|^2}{2}
\frac{\sin(t/2)}{\cos^3(t/2)}.
\end{align}
\noindent Lemma \ref{L:props} now gives
\[ \lr{\W_{x,E}}_{j,k}= \frac{1}{h} \frac{\delta_{jk} -
\hat{x}_j\hat{x}_k}{\sinh(\beta)}+O(1).\]
Combining \eqref{E:Crit Pt Eq} with
\[\sinh\lr{\beta}=2\cosh\lr{\frac{\beta}{2}}\sinh\lr{\frac{\beta}{2}}=2\cosh\lr{\frac{\beta}{2}}\sqrt{\cosh^2\lr{\frac{\beta}{2}}-1}\]
proves \eqref{E:Cov Matrix Forbidden}. Before going on to prove Theorem \ref{T:Main} in
the allowed region, let us prove the following result, which we believe is of independent
interest.
\begin{Lem}[Explicit Expression of $\Pi_{h,E}$ in the Forbidden
Region]\label{L:Forbidden Derivs} Fix $\abs{x}^2>2E.$ Then, with $\beta$ defined as in
Lemma \ref{L:Forbidden Crits} and $1\leq j,k\leq d$, we have
  \begin{align}
  \label{E:Projector on Diag For}  \Pi_{h,E}(x,x) &=
  \lr{2\pi}^{-\frac{d+1}{2}}
  h^{-\frac{d-1}{2}}\frac{\abs{x}^{1/2}e^{\frac{1}{h}\lr{-\abs{x}\sqrt{\abs{x}^2-2E}+E\beta}}}{E^{1/2}\cdot
  \lr{\abs{x}^2-2E}^{1/4}\lr{\sinh \beta}^{d/2}}\lr{1+O(h)}
 \end{align}
where, as before $\hat{x}_k:=\frac{x_k}{\abs{x}}.$
\end{Lem}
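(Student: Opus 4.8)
The plan is to evaluate $\Pi_{h,E}(x,x)$ by the method of stationary phase, proceeding just as in the proof of Lemma~\ref{L:props} but retaining only the leading-order term supplied by Lemma~\ref{L:SP LO}. By Lemma~\ref{L:Resolved Projector} the representation \eqref{E:Projector Integral Forbidden} holds with $\epsilon$ replaced by any positive constant, so I would take $\epsilon=\beta$ with $\beta$ as in Lemma~\ref{L:Forbidden Crits}, and combine this with \eqref{E:Mehler} and the identity $S(t,x,x)=-\abs{x}^2\tan(t/2)$ from \eqref{Mehler-S} to write
\[
\Pi_{h,E}(x,x)=\frac{1}{2\pi}\int_{-\pi}^{\pi}\frac{e^{\frac{i}{h}\psi(s-i\beta)}}{\lr{2\pi i h\sin(s-i\beta)}^{d/2}}\,ds ,\qquad \psi(z):=-\abs{x}^2\tan(z/2)+zE .
\]
By Lemma~\ref{L:Forbidden Crits} the only critical point of $\psi$ on the line $\{\Im z=-\beta\}$ is $z_0=-i\beta$, and from \eqref{E:Phase Derivs 2} together with \eqref{E:Crit Pt Eq} one has $\psi''(-i\beta)=S_{0,0}''(-i\beta)=iE\tanh(\beta/2)\neq 0$.

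First I would check that $\{\Im z=-\beta\}$ is a steepest-descent contour for $\psi$. Using the elementary formula for the imaginary part of $\tan$ gives
\[
\Im\psi(s-i\beta)=\abs{x}^2\,\frac{\sinh(\beta/2)\cosh(\beta/2)}{\sinh^2(\beta/2)+\cos^2(s/2)}-\beta E ,
\]
which is strictly increasing in $\abs{s}$ on $[0,\pi]$ and attains its minimum $\Im\psi(-i\beta)=\abs{x}^2\tanh(\beta/2)-\beta E$ at $s=0$. Hence, cutting the integral with a smooth bump $\chi$ equal to $1$ near $s=0$ and supported away from $\pm\pi$, the piece carrying $1-\chi$ has no critical points and is $O(e^{-c/h})$ for some $c>\Im\psi(-i\beta)$ by repeated integration by parts (non-stationary phase); here I would use that $\abs{\sin(s-i\beta)}^2=\sin^2 s+\sinh^2\beta\ge\sinh^2\beta>0$, so the amplitude is smooth and bounded on the contour. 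This tail is negligible relative to the leading term.

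To the $\chi$-piece I would apply Lemma~\ref{L:SP LO} after factoring out $e^{\frac{i}{h}\psi(-i\beta)}$: the recentred phase $\tilde\psi(s):=\psi(s-i\beta)-\psi(-i\beta)$ obeys $\tilde\psi(0)=\tilde\psi'(0)=0$, $\tilde\psi''(0)=iE\tanh(\beta/2)$, and $\Im\tilde\psi\ge 0$ on $\supp\chi$ by the monotonicity just established, so its leading term gives
\[
\Pi_{h,E}(x,x)=e^{\frac{i}{h}\psi(-i\beta)}\cdot\frac{1}{2\pi}\lr{2\pi h\sinh\beta}^{-d/2}\cdot\lr{\frac{2\pi h}{E\tanh(\beta/2)}}^{1/2}\lr{1+O(h)} .
\]
Here I used $\sin(-i\beta)=-i\sinh\beta$, so that $\lr{2\pi i h\sin(-i\beta)}^{-d/2}=\lr{2\pi h\sinh\beta}^{-d/2}$ is real and positive, and that the Gaussian factor $\sqrt{2\pi i h/\psi''(-i\beta)}=\sqrt{2\pi h/(E\tanh(\beta/2))}$ is likewise real and positive (no stationary-phase phase factor survives because $\psi''(-i\beta)$ lies on the positive imaginary axis); this is consistent with $\Pi_{h,E}(x,x)=\sum_{\abs{\alpha}=N}\phi_{\alpha,h}(x)^2\ge 0$.

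It remains to simplify the elementary prefactors. Since $\Re\psi(-i\beta)=0$ and $\Im\psi(-i\beta)=\abs{x}^2\tanh(\beta/2)-\beta E$, while \eqref{E:Crit Pt Eq} gives $\tanh^2(\beta/2)=(\abs{x}^2-2E)/\abs{x}^2$, one has $\abs{x}^2\tanh(\beta/2)=\abs{x}\sqrt{\abs{x}^2-2E}$ and $(\tanh(\beta/2))^{-1/2}=\abs{x}^{1/2}(\abs{x}^2-2E)^{-1/4}$; substituting these and collecting the powers of $2\pi$ and $h$ yields \eqref{E:Projector on Diag For}. The only genuinely non-routine step is the steepest-descent verification of the second paragraph; everything else is the same bookkeeping of constants that already appeared in Lemma~\ref{L:props}.
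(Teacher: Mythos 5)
Your argument matches the paper's proof in all essentials: both shift the contour to $\Im t=-\beta$ by choosing $\ep=\beta$, observe that the modulus of the integrand peaks precisely at $t=-i\beta$, apply Lemma~\ref{L:SP LO} at that critical point, and collect the same values $\psi(-i\beta)=i(|x|\sqrt{|x|^2-2E}-\beta E)$, $\psi''(-i\beta)=iE\tanh(\beta/2)$, $a(-i\beta)=(\sinh\beta)^{-d/2}$. The only differences are cosmetic refinements on your side: you write out the $\Im\tan$ identity to verify the steepest-descent monotonicity explicitly, and you insert a bump function to isolate the critical point and dispose of the tail by non-stationary phase (which the paper leaves implicit when it invokes Lemma~\ref{L:SP LO} directly on the periodic contour, whose amplitude is not literally Schwartz). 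These are reasonable extra details, not a different route, and your final bookkeeping of the constants reproduces \eqref{E:Projector on Diag For} correctly.
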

\begin{proof}
Let us take $\ep=\beta$ in \eqref{E:Projector Integral Forbidden}. The real part of
the phase along the contour $[-\pi-i\beta, \pi-i\beta]$ is
\[\Re\lr{\frac{i}{h}\lr{S(t-i\beta,x,x)+(t-i\beta)E}}=\frac{\abs{x}^2}{h}\Im
\lr{\tan\lr{\frac{t-i\beta}{2}}}+\frac{\beta}{h}E,\]
which has a unique maximum when $t=0.$ We may therefore apply Lemma \ref{L:SP LO}. Let us
denote
$a(t):=\lr{i \sin t}^{-d/2}.$ We have
\begin{align*}
 \dell_{tt}|_{t=-i\beta} S(t, x,x) & = \frac{iE}{\abs{x}}\sqrt{\abs{x}^2-2E}\\
  S(t, x,x)+tE \mid_{t=-i\beta} & = i\lr{\abs{x}\sqrt{\abs{x}^2-2E}-\beta E}\\
 a(-i\beta) & = \lr{\sinh\beta}^{-d/2}=\lr{\frac{\abs{x}}{E}\sqrt{\abs{x}^2-2E}}^{-d/2}.
\end{align*}
Thus,
\begin{equation}
  \label{E:Projector}
  \Pi_{h,E}(x,x) = \lr{2\pi}^{-\frac{d+1}{2}} h^{-\frac{d-1}{2}}\frac{\lr{\sinh
  \beta}^{-d/2}\abs{x}^{1/2}}{E^{1/2}\cdot
  \lr{\abs{x}^2-2E}^{1/4}}e^{\frac{1}{h}\lr{-\abs{x}\sqrt{\abs{x}^2-2E}+E\beta}}\lr{1+O(h)},
\end{equation}
which is precisely \eqref{E:Projector on Diag For}. This completes the proof of Lemma
\ref{L:Forbidden Derivs}.
\end{proof}

\subsection{Proof of Proposition \ref{P:Cov Matrix Asymptotics} in the
Allowed Region}\label{S:Allowed Proof} The
goal of this section is to prove Equation (\ref{E:Cov Matrix Allowed}),
which is a consequence of the following
Lemma.
\begin{Lem}[Derivatives of $\Pi_{h,E}$ in the Allowed
Region]\label{L:Allowed Derivs} Suppose $0<\abs{x}^2<2E.$
Then
  \begin{align}
  \label{E:Projector on Diag} \Pi_{h,E}(x,x)&=\lr{2\pi
  h}^{-(d-1)}\lr{2E-\abs{x}^2}^{\frac{d}{2}-1}\w_{d-1}\lr{1+O(h)}\\
  \label{E:Projector Deriv on Diag}  \dell_{x_i}|_{x=y}\Pi_{h,
  E}(x,y)&=\dell_y|_{x=y}\Pi_{N,h_N}(t,x,y) =
   O(1/h^{d-1})+O(1/h^{(d+1)/2})\\
  \label{E:Projector Two Deriv on Diag}
  \dell_{x_k}\dell_{y_j}|_{x=y}\Pi_{h, E}(x,y)&= \delta_{kj}\, h^{-2}\cdot
  \lr{2E-\abs{x}^2} \cdot\frac{1}{d}\Pi_{h,E}(x,x)(1+O(h))
  \end{align}
\end{Lem}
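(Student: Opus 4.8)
The plan is to substitute Proposition \ref{BIGPROP} (after differentiating in the $x$- and $y$-slots and then restricting to $x=y$) and to estimate each of its three terms asymptotically. Fix a compact set $K$ in the interior of $\acal_E\setminus\set{0}$. Since $\abs{x}^2/2E$ is bounded away from $0$ and $1$ on $K$, the two solutions $\pm t_0$ of $\cos^2(t_0/2)=\abs{x}^2/2E$ stay in a fixed compact subset of $(0,\pi)$; choose $\delta\in\lr{0,\frac{\pi}{8}}$ so small that $\pm t_0\in(2\delta,\pi-2\delta)$ for every $x\in K$, and take $\twiddle{\chi}_R\equiv 1$ on a neighborhood of $\set{\abs{p}^2\le 2E}$. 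The middle term of \eqref{E:Projector Integral Allowed}, and each of its first two $x,y$-derivatives, is $O(h^\infty)$: by Lemma \ref{L:IBP Decay} the amplitude is $O\lr{h^k\lr{1+\abs{p}^2}^{-k}}$ on $\supp\lr{1-\twiddle{\chi}_R}$, each $x$- or $y$-derivative multiplies it by at most $\frac{i}{h}\lr{1+\abs{p}}$, and the fixed prefactor $\lr{2\pi h}^{-d}$ does not affect this, so letting $k$ be large saves any power of $h$. The third term of \eqref{E:Projector Integral Allowed} is a one-dimensional integral whose phase $S(t,x,x)+tE=-\abs{x}^2\tan(t/2)+tE$ has, inside $\supp(1-\chi_\delta)$, exactly the two non-degenerate critical points $\pm t_0$ with $S_{0,0}''(\pm t_0)=\mp E\sqrt{2E-\abs{x}^2}/\abs{x}\neq 0$ (by \eqref{E:Phase Derivs 2}); by Lemma \ref{L:SP LO} it contributes $O\lr{h^{-(d-1)/2}}$, and after one, resp.\ two, $x,y$-derivatives --- each producing a factor $\frac{i}{h}S_{i,0}(\pm t_0)$, resp.\ also $-\frac{1}{h^2}S_{i,0}S_{0,j}(\pm t_0)$, by \eqref{E:Phase Derivs 1}--\eqref{E:Phase Derivs 2} --- it contributes $O\lr{h^{-(d+1)/2}}$, resp.\ $O\lr{h^{-(d+3)/2}}$. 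These are the ``$O(1/h^{(d+1)/2})$'' terms in \eqref{E:Projector Deriv on Diag}.

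The first term of \eqref{E:Projector Integral Allowed} is the main one; its phase has a non-degenerate critical manifold of dimension $d-1$ (this is Lemma \ref{L:Allowed Crits}), which produces the power $h^{-(d-1)}$. Set $\Psi(t,x,p):=\widehat{S}(t,x,p)-x\cdot p+tE$. From \eqref{E:Dual Phase} one reads off $\Psi|_{t=0}\equiv 0$, $\dell_p\Psi|_{t=0}=0$, and $\dell_t\Psi|_{t=0}=E-\frac12\lr{\abs{x}^2+\abs{p}^2}$, while $\dell_p\Psi|_{t=\pi}=-2x\neq 0$ (here $x\neq 0$ is used). Hence the critical set of $\Psi$ inside $\supp\chi_\delta\x\supp\twiddle{\chi}_R$ is precisely
\[
W=\set{t=0}\x\set{\abs{p}=\rho_0}\;\cong\;S^{d-1}_{\rho_0},\qquad \rho_0:=\sqrt{2E-\abs{x}^2},
\]
and the $t=\pi$ part of the support is non-stationary, hence contributes $O(h^\infty)$. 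In coordinates $(t,u)$ normal to $W$ with $u=\abs{p}-\rho_0$, Taylor expansion of \eqref{E:Dual Phase} at a point of $W$ gives $\Psi=\tfrac12(x\cdot p)t^2-\rho_0\,tu+O(\text{cubic})$, so the transverse Hessian $\twomat{x\cdot p}{-\rho_0}{-\rho_0}{0}$ is non-degenerate with Morse index $1$ at every point of $W$; thus $\Psi$ is Bott--Morse with connected critical manifold $W$ and $\Psi(W)=0$, and Lemma \ref{L:SP} applies. Since $\chi_\delta(0)=1$, $\cos 0=1$, and $\twiddle{\chi}_R\equiv1$ near $W$, the amplitude $\chi_\delta(t)\lr{\cos t}^{-d/2}\twiddle{\chi}_R(p)$ equals $1$ on $W$, so the leading coefficient is $\int_W d\mu_W=\mu_W(W)$; using $\abs{\det \mathrm{Hess}^{\perp}\Psi}^{1/2}=\rho_0$ and the splitting of $\abs{dp}$ into radial and spherical parts, $\mu_W(W)=\rho_0^{-1}\mathrm{Vol}\lr{S^{d-1}_{\rho_0}}=\rho_0^{d-2}\w_{d-1}=\lr{2E-\abs{x}^2}^{d/2-1}\w_{d-1}$. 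Collecting the powers of $h$ (from $\lr{2\pi h}^{-d}$, from the factor $\lr{2\pi ih}^{((d+1)-(d-1))/2}$ in Lemma \ref{L:SP}, and from $\tfrac{dt}{2\pi}$) together with the universal phase and numerical constants yields \eqref{E:Projector on Diag}.

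The two derivative formulas come from the same Bott--Morse analysis with the amplitude changed. As the cutoffs and $\lr{\cos t}^{-d/2}$ are $x,y$-independent, $\dell_{x_i}$ brings down $\frac{i}{h}\dell_{x_i}\widehat{S}=\frac{i}{h}\lr{-x_i\tan t+p_i/\cos t}$, equal to $\frac{i}{h}p_i$ on $W$, and $\dell_{y_j}$ of $e^{-\frac{i}{h}y\cdot p}$ brings down $-\frac{i}{h}p_j$. For $\dell_{x_i}|_{x=y}\Pi_{h,E}$ the leading Bott--Morse coefficient is $\frac{i}{h}\int_W p_i\,d\mu_W=0$ by the reflection symmetry $p\mapsto -p$ on $W$, so the first term contributes only $O\lr{h^{-(d-1)}}$; together with the third term this is \eqref{E:Projector Deriv on Diag}, and $\dell_{y_i}|_{x=y}\Pi_{h,E}=\dell_{x_i}|_{x=y}\Pi_{h,E}$ since $\Pi_{h,E}(x,y)=\Pi_{h,E}(y,x)$. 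For $\dell_{x_k}\dell_{y_j}|_{x=y}\Pi_{h,E}$, since $-p_j$ is $x$-independent there is no cross term, the amplitude on $W$ is $\lr{-\frac{i}{h}p_j}\lr{\frac{i}{h}p_k}=\frac{1}{h^2}p_j p_k$ times that of \eqref{E:Projector on Diag}, and the leading coefficient is $\frac{1}{h^2}\int_W p_j p_k\,d\mu_W=\frac{\delta_{jk}}{dh^2}\int_W\abs{p}^2\,d\mu_W=\delta_{jk}\,h^{-2}\lr{2E-\abs{x}^2}\frac{1}{d}$ times the leading coefficient of $\Pi_{h,E}(x,x)$ --- that is \eqref{E:Projector Two Deriv on Diag}. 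Substituting \eqref{E:Projector on Diag}--\eqref{E:Projector Two Deriv on Diag} into $\lr{\W_{x,E}}_{kj}=\Pi_{h,E}^{jk}/\Pi_{h,E}-\Pi_{h,E}^{j0}\Pi_{h,E}^{0k}/\Pi_{h,E}^2$, in which the second summand is of lower order, then gives the allowed-region case of Proposition \ref{P:Cov Matrix Asymptotics}.

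The main obstacle is the Bott--Morse analysis of the first term at $t=0$: the very point of passing to the $\widehat{S}$-representation in Proposition \ref{BIGPROP} is that the diagonal singularity $(\sin t)^{-d/2}$ of the Mehler kernel has been replaced by the smooth amplitude $(\cos t)^{-d/2}$, so that $t=0$ becomes an honest transversally non-degenerate critical locus rather than a singular one; verifying non-degeneracy of the transverse Hessian over all of $W$ (in particular where $x\cdot p=0$) and computing the Leray measure $\mu_W$ are the crux. A more routine but necessary task is bounding the remainders uniformly in $x\in K$: fixing $\delta$ independently of $x$, checking that the $t=\pi$ part of the $\chi_\delta$-region and the isolated points $\pm t_0$ enter only at the subleading orders claimed, and tracking the $c_1 h$ term of the Bott--Morse expansion (for $d=2$ the contribution of $\pm t_0$ is relatively larger, so a little extra care with the error terms is needed there).
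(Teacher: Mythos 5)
Your proof is correct and takes essentially the same route as the paper's: the same three-term decomposition from Proposition \ref{BIGPROP}, the middle term killed to $O(h^\infty)$ via Lemma \ref{L:IBP Decay}, the same Bott--Morse stationary phase on the $(d-1)$-dimensional critical manifold $\set{t=0}\x S_{\rho_0}^{d-1}$ with transverse Hessian $\twomat{x\cdot p}{-\rho_0}{-\rho_0}{0}$ and Leray measure $(2E-\abs{x}^2)^{d/2-1}\w_{d-1}\,d\w$, and the same symmetry observations that $\int_W p_i\,d\mu_W=0$ and $\int_W p_jp_k\,d\mu_W=\frac{\delta_{jk}}{d}\rho_0^2\int_W d\mu_W$. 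You fill in a few details the paper leaves implicit --- ruling out the $t\approx\pi$ part of $\supp\chi_\delta$ via $\dell_p\Psi|_{t=\pi}=-2x\neq 0$, and identifying the real critical points $\pm t_0$ of the Mehler-term phase $S(t,x,x)+tE$ together with $S''(\pm t_0)=\mp E\sqrt{2E-\abs{x}^2}/\abs{x}$ (the paper's appeal to Lemma \ref{L:Forbidden Crits} for this step is a bit loose, since in the allowed region those critical points lie on the real axis, but the calculation is the same) --- yet the argument is the same.
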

\begin{proof}
Fix $x$ with $0<\abs{x}^2<2E.$ Equations \eqref{E:Projector on
Diag}-\eqref{E:Projector Two Deriv on Diag} are
obtained by applying stationary phase to the oscillatory integral
representation \eqref{E:Projector Integral
Allowed} for $\Pi_{h,E}$ and its derivatives. To start, note that the second
term in \eqref{E:Projector Integral
Allowed} is $O(h^{\infty})$ since we may take $k$ arbitrarily large. Also,
by Lemma \ref{L:Forbidden Crits}, we
may apply stationary phase to the third term of \eqref{E:Projector Integral
Allowed} and find that is
contribution is on the order of $h^{-\frac{d-1}{2}}.$

As we prove below, the first integral gives the leading contribution to $\Pi_{h,E}(x,x)$,
which
is on the order of $h^{-(d-1)}.$ To
see this, we will apply stationary phase. Let us compute the critical set
for the phase function
$\widehat{S}(t,x,p)-y\cdot p + Et$. To emphasize the relation between the
phase factor $\widehat{S}(t,x,p)$ and
the classical path of energy $E$ ending in time $t$ at $x$ with initial
momentum $p,$ let us write $x=x_f$ and
$p=p_i,$ where the subscripts $f$ and $i$ stand for initial and terminal
positions and momenta. We have the
relations:
\begin{equation}
x_i=\frac{x_f-p_i\sin t}{\cos t},\qquad p_f=\frac{p_i-x_f\sin t}{\cos
t}.\label{E:Classical Mechanics}
\end{equation}
The $t-$critical point equation is
\[E+\dell_t \widehat{S}(t,x_f,p_i) = E-
\frac{\lr{x_i(t,x_f,p_i)}^2+p_i^2}{2}=0\]
since $\widehat{S}$ satisfies the Hamilton-Jacobi equation associated to
$H_h.$ The $p_i$ critical point equation
is:
\[y=\dell_{p_i}\widehat{S}(t,x_f, p_i)=x_i.\]
On the diagonal, we have $y=x_f$ so that this relation is $x_i=x_f.$
Therefore, the critical manifold for
$\widehat{S}(t,x,x)-y\cdot p + Et$ is
\[W_{x,E}=\setst{\lr{t,p}\in [0,2\pi)\x T^*_x \R^d}{\pi\Phi^t(x,p)=x\quad
\text{and}\quad
\frac{\abs{p}^2+\abs{x}^2}{2}=E},\]
where $\pi:T^*\R^n \gives \R^n$ is the projection to the base and $\Phi^t$
is the Hamilton flow for the classical
harmonic oscillator $\frac{1}{2}\lr{\abs{x}^2+\abs{p}^2}.$ To apply the
method of stationary phase, we must be
sure that $\W_{x,E}$ is non-degenerate.
\begin{Lem}\label{L:Allowed Crits}
For $\delta>0$ sufficiently small, $W_{x,E}$ restricted to the support of
$\chi_{\delta}$ is
\begin{equation}
W_{x,E}\intersection supp\lr{\chi_{\delta}(t)}=\set{t=0}\x
S_{\sqrt{2E-\abs{x}^2}}^*=\set{0}\x\setst{p\in T^*_x
\R^d}{\frac{\abs{x}^2+\abs{p}^2}{2}=E},\label{E:Crit Man}
\end{equation}
which is a non-degenerate critical manifold. Moreover, the Morse index of
$\widehat{S}(t,x,p)-y\cdot p + Et$
along $W_{x,E}$ is $1.$
\end{Lem}
\begin{proof}
From the relations \eqref{E:Classical Mechanics}, we see that if $x\neq 0,$
then for $\delta$ sufficiently small,
the only value of $t$ that is in the support of $\chi_{\delta}$ for which we
may simultaneously solve
\[p_f(t,x_f,p_i)=p_i \quad \text{and}\quad x_f=x_i(t,x_f,p_i)\quad
\text{and}\quad
\frac{\abs{x}^2+\abs{p}^2}{2}=E\]
is $t=0.$ This proves \eqref{E:Crit Man}. To check that the critical
manifold is non-degenerate, let us compute
the normal Hessian of $\widehat{S}(t,x,p)-y\cdot p + Et$ along $W_{x,E}.$
Note that the fiber of the normal
bundle to $W_{x,E}\intersection supp \lr{\chi_{\delta}(t)}$ is spanned by
$\dell_t$ and $\dell_r,$ where
$r=\abs{p}$ so that $\dell_r$ is the radial vector field. We have
\begin{align*}
  \dell_{tt}\lr{\widehat{S}(t,x_f,p_i)-y\cdot p_i + t E}&= x_f\cdot p_i\\
  \dell_{tr}\lr{\widehat{S}(t,x,p)-y\cdot p + t E}&=-\abs{p_i}\\
  \dell_{rr}\lr{\widehat{S}(t,x,p)-y\cdot p + tE}&=0.
\end{align*}
Hence, the normal Hessian is
\[\twomat{x_f\cdot p_i}{-\abs{p_i}}{-\abs{p_i}}{0}.\]
The determinant is $-\abs{p_i}^2=\abs{x_f}^2-2E,$ which is non-zero as long
as $\abs{x_f}< 2E.$ Hence, $W_{x,E}$
is non-degenerate. Moreover, one easily verifies that the normal Hessian
always has one positive and one negative
eigenvalue. The Morse index of $\widehat{S}(t,x_f,p_i)-y\cdot p_i + Et$
along $W_{x,E}$ is therefore equal to
$1.$ This completes the proof of Lemma \ref{L:Allowed Crits}.
\end{proof}
Returning to the proof of Equations \eqref{E:Projector on
Diag}-\eqref{E:Projector Two Deriv on Diag}, we apply
the stationary phase method (\S \ref{S:SP}) to the first term in
\eqref{E:Projector Integral Allowed}. Writing
$r$ for the radial coordinate on $\R^d,$ we have
\[d\mu_{W_{x,E}}= \left.\frac{dt\wedge dx }{\abs{\det Hess^{\perp}
\widehat{S}}^{1/2}\cdot dt\wedge
dr}\right|_{W_{x,E}}.\]
Write $d\w$ for the uniform measure on the sphere of radius
$\sqrt{2E-\abs{x}^2}$ normalized to have volume $1$
and $\w_{d-1}$ of the volume of unit sphere in $\R^d.$ We may thus express
$dt\wedge dx$ as $\w_{d-1} dt \wedge
r^{d-1} dr \wedge d\w.$ We find that
\[d\mu_{W_{x,E}}=\lr{2E-\abs{x}^2}^{\frac{d}{2}-1}\w_{d-1}\cdot d\w.\]
Observe that the amplitude $\frac{\chi_{\delta}(t)}{\cos
t^{-d/2}}\twiddle{\chi}_R(p)$ in the first integral of
(\ref{E:Projector Integral Allowed}) is identically equal to $1$ on
$W_{x,E}.$ Hence its integral over with
respect to $d\mu_{W_{x,E}}$ is
\[\lr{2E-\abs{x}^2}^{\frac{d}{2}-1}\w_{d-1}.\]
Noting that $\lr{\hat{S}(t,x,p)-y\cdot p +t E}|_{M_0}=0$ we obtain from
Lemma \ref{L:SP} that $\Pi_{h,E}(x,x)$ may
written as
\begin{equation}
\lr{2\pi
h}^{-(d-1)}\lr{2E-\abs{x}^2}^{\frac{d}{2}-1}\w_{d-1}\lr{1+O(h)}.\label{E:Allowed
Amp}
\end{equation}
This confirms Equation (\ref{E:Projector on Diag}).

In order to compute the
asymptotics of
$\dell_{x_k}|_{x=y}\Pi_{h,E}(x,y),$ we argue in a similar fashion. First, we
differentiate under the integral in
all three terms of \eqref{E:Projector Integral Allowed}. Just as before, the
second term has order
$O(h^{\infty}),$ and the third term is $O(1/h^{\frac{d+1}{2}}).$
The first term, whose leading order term $O(1/h^{d})$ vanishes, will at most give a
$O(1/h^{d-1})$ contribution. To prove this, note that when $t=0,$
\begin{equation}
\dell_{x_k}|_{x=y}\lr{\widehat{S}(t,x,p)-y\cdot p + Et}= p_k,\label{E:Deriv
Amp}
\end{equation}
whose integral over $W_{x,E}$ vanishes. Thus, applying Lemma \ref{L:SP}, we
find that
\begin{align*}
  \dell_{x_k}|_{x=y}\Pi_{h,E}(x,y)&= O(\Pi_{h,E}(x,x)).
\end{align*}
This proves Equation (\ref{E:Projector Deriv on Diag}). It remains to study
$\dell_{x_k}\dell_{y_j}|_{x=y}\Pi_{h,E}(x,y).$ Like before, we differentiate
under the integral sign in
\eqref{E:Projector Integral Allowed}. The main contribution comes from the
first term. To evaluate it, note that
when $t=0,$
\[\dell_{x_k}\dell_{y_j}|_{x=y}e^{\frac{i}{h}\lr{\widehat{S}(t,x,p)-y\cdot p
+ Et}}=
\delta_{kj}\lr{h^{-2}p_k\cdot
p_j}e^{\frac{i}{h}\lr{\widehat{S}(t,x,p)-y\cdot p + Et}}.\]
We therefore have
\[\int_{W_{x,E}}\dell_{x_k}\dell_{y_j}|_{x=y}e^{\frac{i}{h}\lr{\widehat{S}(t,x,p)-y\cdot
p + Et}}d\mu_{W_{x,E}} =
h^{-2}\delta_{jk}\frac{\lr{2E-\abs{x}^2}^{\frac{d}{2}}}{d}.\]
Applying Lemma \ref{L:SP} proves (\ref{E:Projector Two Deriv on Diag}) and
completes the proof of Proposition \ref{P:Cov Matrix
Asymptotics} in the allowed region.
\end{proof}

\end{document}